\documentclass[14pt]{amsart}

\usepackage{pifont}
\usepackage{mathrsfs}
\usepackage{amscd,color}
\usepackage{amsmath}
\usepackage{latexsym}
\usepackage{amsfonts}
\usepackage{amssymb}
\usepackage{amsthm}
\usepackage{graphicx}
\usepackage{amsmath,amsfonts}
\usepackage{CJK}
\oddsidemargin .8cm
\evensidemargin .8cm
 \marginparsep 10pt
 \topmargin  0.5cm
 \headsep10pt
 \headheight 10pt
 \textheight 8.3in
 \textwidth 5.8in
 \sloppy

 \setlength{\parskip}{8pt}
\usepackage[linkcolor=blue,citecolor=blue]{hyperref}%pagebackref
%backref=page

\makeatletter
\newcommand{\Extend}[5]{\ext@arrow0099{\arrowfill@#1#2#3}{#4}{#5}}
\makeatother
%ABREVIATIONS%
\let\pa=\partial

\let\r=\rho

\def\eps\epsilon

%LETTRES RONDES
\def\cA{{\cal A}}

%MACROS SANS ARGUMENTS

\def\C{\mathop{\bf C\kern 0pt}\nolimits}
\def\DD{\mathop{\bf D\kern 0pt}\nolimits}
\def\K{\mathop{\bf K\kern 0pt}\nolimits}
\def\N{\mathop{\bf N\kern 0pt}\nolimits}
\def\Q{\mathop{\bf Q\kern 0pt}\nolimits}

\newcommand{\beq}{\begin{equation}}
\newcommand{\eeq}{\end{equation}}
\newcommand{\ben}{\begin{eqnarray}}
\newcommand{\een}{\end{eqnarray}}
\newcommand{\beno}{\begin{eqnarray*}}
\newcommand{\eeno}{\end{eqnarray*}}

\def\R{\mathop{\mathbb R\kern 0pt}\nolimits}

\newtheorem{theorem}{Theorem}[section]
\newtheorem{proposition}[theorem]{Proposition}
\newtheorem{lemma}[theorem]{Lemma}
\newtheorem{remark}{Remark}[section]

%[section]

%\newtheorem{env-name}{label-text}
\theoremstyle{remark}% \newtheorem{Cor}{Corollary}
%\theoremstyle{plain} \newtheorem{Exa}{Example}[section]
%\theorembodyfont{\rmfamily}

%\usepackage[pagewise]{lineno}
%\linenumbers

\begin{document}

 \title[Scattering theory for focusing $2d$ INLS]{\bf A Remark on the Scattering theory for  the $2D$ radial focusing
 INLS}

\author{Chengbin Xu}%
\address{The Graduate School of China Academy of Engineering Physics, P. O. Box 2101, Beijing, China, 100088}
\email{xcbsph@163.com}%\address{School of Mathematics and Statistics,
 %Zhengzhou University,
% 100 Kexue Road, Zhengzhou, Henan, 450001, China
%}%

\author{Tengfei Zhao}%
\address{Beijing Computational Science Research Center,
No. 10 West Dongbeiwang Road, Haidian District, Beijing, China, 100193 }
\email{zhao\underline{ }tengfei@csrc.ac.cn}%

\maketitle
\vspace{-1.3cm}

\begin{abstract}
  We consider the  scattering results of the radial
  solutions below the ground state to the focusing inhomogeneous nonlinear Schr\"odinger equation
  $$i\partial_tu+\Delta u +|x|^{-b}|u|^{p}u=0$$
  in two dimension, where $0<b<1$ and $2-b<p<\infty$. We use a modified version of Arora-Dodson-Murphy's approach \cite{ADM} to give a new proof that extends the scattering results of \cite{FG-2019} and avoids concentration compactness.
\end{abstract}

\begin{center}
 \begin{minipage}{12cm}
   { \small {{\bf Key Words:}  Schr\"odinger equation;   Scattering theory.}
      {}
   }\\
    { \small {\bf AMS Classification:}
      {35P25,  35Q55, 47J35.}
      }
 \end{minipage}
 \end{center}

\begin{CJK*}{GBK}{song}

%%%%%%%%%%%%%%%%%%%%%%%%%%%%%%%%%%%%%%%%%%%%%%%%%%%%%%%%%%%%%%%%%%%%%%%%%%%%%%%%%%%%%%%%%%%%%%%%%%%%%%%%%%%%%%%%%%%%%%%%%%%%%%%%%%%%%%%
\setcounter{section}{0}\setcounter{equation}{0}
\section{Introduction}

\noindent

 We consider the Cauchy problem of the inhomogeneous nonlinear Schr\"odinger equation (INLS)
 \begin{align}\label{INLS}
 \begin{cases}
   &i\pa_tu+\Delta u+|x|^{-b}|u|^{p}u=0,\ \ \ \  t\in\R,\ x\in\R^2,\\
   &u(0,x)=u_0(x) \in H^1(\R^2),
 \end{cases}
 \end{align}
with  $0<b<1$ and $2-b<p<\infty$.
The equation \eqref{INLS} arises naturally in nonlinear optics for the
propagation of laser beams,  see \cite{Dinh-blowup} \cite{Dinh-es}. %we refer to

The equation \eqref{INLS} is $\dot H^{s_p}(\R^2)$-critical in the sense that the $\dot H^{s_p}(\R^2)$ norm of initial data is invariant under the standard scaling
$$u_{\lambda}(t,x)=\lambda^{\frac{2-b}{p}}u(\lambda^2 t,\lambda x),$$
where
$s_p=\frac{d}{2}-\frac{2-b}{p}.$
The solutions to equation \eqref{INLS} conserve the mass, defined by
$$M(u):=\int_{\R^2}|u|^2dx=M(u_0),$$
and the energy, defined as the sum of the kinetic and potential energies:
$$E(u):=\int_{\R^2}\frac12|\nabla u|^2-\frac1{p+2}|x|^{-b}|u|^{p+2}dx=E(u_0).$$
%Mass supercritical and energy subcritical equations refer to the case where $0<s_p<1$.
From $2-b<p<\infty$,
we   have $0<s_p<1$, which implies that the equation \eqref{INLS} is mass supercritical and energy subcritical.

Now, we recall the well-posedness theory of the equation \eqref{INLS}.
 In \cite{GS}, Genoud and Stuart proved that solution to the Cauchy problem of \eqref{INLS} is locally well-posed in $H^1(\R^d)$ for $0<b<\min\{2,d\}$. More recently, Guzm$\acute{a}$n \cite{G} established the local well-posedness of \eqref{INLS} based on Strichartz estimates. More precisely,   for $d\geq4$ with $0<b<2$ or $d=1,2,3$ with  $0<b<\frac{d}3,$ the Cauchy problem \eqref{INLS} is locally well-posed in $H^1(\R^d)$. Dinh \cite{Dinh} extended Guzm$\acute{a}$n's results to larger regions. %in dimension 3.% for $0<b<\frac32$ and $\frac{4-2b}{3}<p<\frac{6-4b}{2b-1}.$

%History:
%\textcolor[rgb]{1.00,0.00,0.50} {
 %local well-posedness
%defocusing case
%focusing case
%}

This equation admits a global nonscattering solution of the form $u(t)=e^{it}Q,$ where $Q$ is the ground state solution to the elliptic equation
$$-\Delta Q +Q-|x|^{-b}|Q|^{p}Q=0.$$ The existence of the ground state is proved in Genoud \cite{G},\cite{G1}, Genoud and Stuart \cite{GS}, while the uniqueness is handled in Yangida \cite{Y}, Genoud \cite{G2}.
 %More precisely,  the existence and uniqueness of the radial, positive, minimal mass solution $Q$ hold for $d\geq1$ and $0\leq b<\min\{2,d\}$.
Moreover,  the global well-posedness and scattering theory of solutions to \eqref{INLS} under the ground states $Q$ have been studied in  \cite{Campos-2019},\cite{FG-2017},\cite{FG-2019}, which extended the results of \cite{DHR-2008,HR} for nonlinear Schr\"odinger equation with $|u|^pu$ type nonlinearities.

In this paper, we consider the mass-supercritical case %$2-b<p<\infty$
%and $0<b<1$ ,
 in two spatial dimension. We will give another proof of the following scattering results.

\begin{theorem}\label{TMain}
  Let $0<b<1$ and $2-b<p<\infty$. Suppose $u_0$ is radial and such that
  $$M(u_0)^{1-s_p}E(u_0)^{s_p}<M(Q)^{1-s_p}E(Q)^{s_p},$$
and
$$ \|u_0\|_{L^2}^{1-s_p}\|\nabla u_0\|_{L^2}^{s_p}<\|Q\|_{L^2}^{1-s_p} \|\nabla Q\|_{L^2}^{s_p}.$$
  Then the solution $u$ to equation \eqref{INLS} with initial data $u_0$ is globally well-posed and scatters, that is, there exist
  $u_\pm\in H^1(\R^2)$, such that
$$ \lim_{t\to\pm\infty}\|u(t)-e^{it \Delta}u_\pm\|_{H^1(\R^2)}=0  .$$
\end{theorem}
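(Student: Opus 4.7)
The plan is to adapt the Arora--Dodson--Murphy approach \cite{ADM} to the inhomogeneous $2D$ setting. First I would establish a coercivity estimate using the sharp weighted Gagliardo--Nirenberg inequality
\[
\int_{\R^2}|x|^{-b}|f|^{p+2}\,dx \le C_{GN}\,\|\nabla f\|_{L^2}^{a}\,\|f\|_{L^2}^{b_{\ast}},
\]
whose optimizer is the ground state $Q$, together with the Pohozaev identities for $Q$ and the two sub-threshold inequalities in the hypothesis. A continuity argument then shows the existence of $\delta>0$ such that $\|u(t)\|_{L^2}^{1-s_p}\|\nabla u(t)\|_{L^2}^{s_p}\le(1-\delta)\|Q\|_{L^2}^{1-s_p}\|\nabla Q\|_{L^2}^{s_p}$ on the maximal existence interval. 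In particular, $\|u\|_{L^\infty_t H^1_x}$ is bounded and, by the local theory of Genoud--Stuart and Guzm\'an, the solution extends globally in time.

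\textbf{Scattering criterion.} Next, I would reduce $H^1$-scattering to a Tao-type vanishing condition: there exist $R,\epsilon_0>0$ such that if, along some sequence $t_n\to +\infty$,
\[
\int_{|x|\le R}|u(t_n,x)|^2\,dx \le \epsilon_0,
\]
then $u$ scatters forward in time. The derivation rests on Strichartz estimates, Duhamel, and the small-data theory. The technically delicate point is estimating the inhomogeneous nonlinearity $|x|^{-b}|u|^p u$ in admissible Strichartz spaces; this is handled by splitting into $|x|\le 1$ and $|x|\ge 1$ and combining H\"older, Sobolev embedding, and the $2D$ radial pointwise decay $|u(x)|\lesssim |x|^{-1/2}\|u\|_{L^2}^{1/2}\|\nabla u\|_{L^2}^{1/2}$.

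\textbf{Morawetz estimate.} To produce such a vanishing sequence, I would use a localized virial identity with a smooth radial weight $\phi_R$ equal to $|x|^2/2$ on $|x|\le R$ and bounded on $|x|\ge 2R$. Setting
\[
Z(t)=2\,\IM\!\int \overline{u}\,\nabla\phi_R\cdot\nabla u\,dx,
\]
a direct calculation shows that on $|x|\le R$ the quantity $Z'(t)$ reduces to the standard virial $8\int|\nabla u|^2-\tfrac{4p+4b}{p+2}\int|x|^{-b}|u|^{p+2}$, which by step~1 is bounded below by $c\int_{|x|\le R}\bigl(|\nabla u|^2+|x|^{-b}|u|^{p+2}\bigr)\,dx$, plus errors supported in $|x|\ge R$. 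Since $|Z(t)|\lesssim R\,\|u\|_{H^1}^{2}$ uniformly, integrating in time on $[0,T]$ yields
\[
\frac{1}{T}\int_0^T\!\!\int_{|x|\le R}|x|^{-b}|u|^{p+2}\,dx\,dt \;\lesssim\; \frac{R}{T}+\mathrm{err}(R),
\]
from which a suitable choice of $R$ and $T$ produces a sequence $t_n$ satisfying the hypothesis of the scattering criterion.

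\textbf{Main obstacle.} The principal difficulty will be controlling the error terms supported on $|x|\ge R$ in the Morawetz computation, in particular the nonlinear tail $\int_{|x|\ge R}|x|^{-b}|u|^{p+2}\,dx$ and the derivative-cutoff contributions. Here I would exploit two features simultaneously: the decay of the weight $|x|^{-b}$ at infinity (which helps since $b>0$), and the radial Sobolev pointwise decay just mentioned. Together these should yield $\int_{|x|\ge R}|x|^{-b}|u|^{p+2}\lesssim R^{-b-p/2}\|u\|_{H^1}^{p+2}\to 0$ as $R\to\infty$. The subtlety is matching the sharp variational step so that the coercivity constant $\delta$ dominates the cutoff error uniformly; this precise balancing is the content of the ``modified'' Arora--Dodson--Murphy scheme advertised in the abstract, and is the place where the argument must be carried out with care in order to accommodate the full range $2-b<p<\infty$.
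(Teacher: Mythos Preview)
Your overall architecture matches the paper, but the scattering criterion you state is the three-dimensional Dodson--Murphy version and will not close in $2D$. In two dimensions the dispersive decay is only $|t|^{-1}$, which is not integrable, so when you split the Duhamel term at time $T_0$ the ``far past'' piece
\[
\int_{0}^{T_0-\epsilon^{-\theta}} e^{i(t-s)\Delta}\bigl(|x|^{-b}|u|^{p}u\bigr)(s)\,ds
\]
cannot be made small using only the uniform $H^1$ bound and the local-mass smallness at a single time. The paper's criterion (Lemma~\ref{SC}) therefore carries an \emph{additional} hypothesis, the spacetime bound
\[
\int_{0}^{T}\int_{\R^2}|x|^{-b}|u|^{p+2}\,dx\,dt \lesssim T^{\alpha},\qquad 0<\alpha<1,
\]
and uses it via Cauchy--Schwarz in $s$ to beat the logarithmic divergence of $\int|t-s|^{-1}\,ds$, yielding a factor $T_0^{\alpha/2}\epsilon^{\theta/2}$ that can be balanced against the near-past piece. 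This extra input is exactly the ``modification'' in the ADM scheme, and your write-up misidentifies the main obstacle: the Morawetz tail errors are easy (your radial Sobolev estimate $\int_{|x|\ge R}|x|^{-b}|u|^{p+2}\lesssim R^{-b-p/2}$ is correct), whereas the genuinely delicate step is the $2D$ scattering criterion.

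Consequently, the Morawetz output you should aim for is not a vanishing sequence but the global spacetime bound itself (Proposition~\ref{pro1}). Your own inequality $\frac{1}{T}\int_0^T\int_{|x|\le R}|x|^{-b}|u|^{p+2}\lesssim \frac{R}{T}+R^{-\alpha}$, combined with your tail estimate, already gives $\int_0^T\int_{\R^2}|x|^{-b}|u|^{p+2}\lesssim R+TR^{-\alpha}$; optimizing $R=T^{1/(1+\alpha)}$ yields $T^{\beta}$ with $\beta<1$, which is what the criterion needs. (A minor point: the paper takes the weight to be $3R|x|$ for $|x|>2R$ rather than bounded, so that the Hessian stays nonnegative and there is no kinetic error on the annulus; with your bounded weight you would have to absorb an extra $\int_{R<|x|<2R}|\nabla u|^2$ term, which can be done via the coercivity-on-balls identity but is not as clean.)
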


By the concentration compactness/rigidity method, Farah-Guzm\'{a}n \cite{FG-2019} proved Theorem \ref{TMain} for the case $0<b<\frac23$.
However, the well-posedness theory of \eqref{INLS} for $0<b<1$ has been proved by \cite{Dinh}.
% We extend the well-posedness results of \eqref{INLS}.
  Then, inspired by the new approach of \cite{ADM, DM-2017-PAMS}, we present another proof that avoids the concentration compactness. Our proof is based on a Virial/Morawetz estimate, the radial Sobolev embedding and a scattering criterion that we will establish.

The rest of this paper is organized as follows: In section 2, we set up some notations and recall some basic properties for the equation \eqref{INLS}.  We will  prove a new scattering criterion for \eqref{INLS} %, Lemma \ref{SC}
in section 3. In section 4, via the Morawetz identity, we will establish the Virial/Morawetz estimates and then show that the solution satisfies the scattering criterion of Lemma \ref{SC}, thereby completing the proof of Theorem \ref{TMain}.

We conclude the introduction by giving some notations which
will be used throughout this paper. We always use $X\lesssim Y$ to denote $X\leq CY$ for some constant $C>0$.
Similarly, $X\lesssim_{u} Y$ indicates there exists a constant $C:=C(u)$ depending on $u$ such that $X\leq C(u)Y$.
We also use the big-oh notation $\mathcal{O}$. e.g. $A=\mathcal{O}(B)$ indicates $C_{1}B\leq A\leq C_{2}B$ for some constants $C_{1},C_{2}>0$.
The derivative operator $\nabla$ refers to the spatial  variable only.
We use $L^r(\mathbb{R}^2)$ to denote the Banach space of functions $f:\mathbb{R}^2\rightarrow\mathbb{C}$ whose norm
$$\|f\|_r:=\|f\|_{L^r}=\Big(\int_{\mathbb{R}^2}|f(x)|^r dx\Big)^{\frac1r}$$
is finite, with the usual modifications when $r=\infty$. For any non-negative integer $k$,
we denote by $H^{k,r}(\mathbb{R}^2)$ the Sobolev space defined as the closure of smooth compactly supported functions in the norm $\|f\|_{H^{k,r}}=\sum_{|\alpha|\leq k}\|\frac{\partial^{\alpha}f}{\partial x^{\alpha}}\|_r$, and we denote it by $H^k$ when $r=2$.
For a time slab $I$, we use $L_t^q(I;L_x^r(\mathbb{R}^2))$ to denote the space-time norm
\begin{align*}
  \|f\|_{L_{t}^qL^r_x(I\times \R^2)}=\bigg(\int_{I}\|f(t,x)\|_{L^r_x}^q dt\bigg)^\frac{1}{q}
\end{align*}
with the usual modifications when $q$ or $r$ is infinite, sometimes we use $\|f\|_{L^q(I;L^r)}$ or $\|f\|_{L^qL^r(I\times\mathbb{R}^2)}$ for short.

\section{Preliminaries}
%\noindent
 We start this section by introducing some notations used throughout the paper. %We also recall the Strichartz estimates associated to the linear Schr\"odinger propagator
We say the pair $(q,r)$ is $L^2-$adimissible or simply admissible pair if it satisfies the condition
$\frac{2}{q}=\frac{d}{2}-\frac{d}{r}$
and
\begin{eqnarray}
\begin{cases}
  2\leq r\leq \frac{2d}{d-2},\ \ d\geq3;\\
  2\leq r<\infty,\ \ d=2;\\
  2\leq r\leq \infty; \ \ d=1.
\end{cases}
\end{eqnarray}
For $s>0$, we also say the pair $(q,r)$ is  $\dot{H^s}-$admissible if
$\frac{2}{q}=\frac{d}{2}-\frac{d}{r}-s$
and
\begin{eqnarray}
\begin{cases}
  \frac{2d}{d-2s}\leq r\leq (\frac{2d}{d-2})^-,\ \ d\geq3;\\
  \frac{2}{1-s}\leq r\leq((\frac{2}{1-s})^+)^{'},\ \ d=2;\\
  \frac2{1-2s}\leq r\leq \infty; \ \ d=1.
\end{cases}
\end{eqnarray}
Here, $a^-$ is a fixed number and slightly smaller than $a$( i. e., $a^-=a-\epsilon$, where $\epsilon$ is small enough) and,  we define $a^+$ in a similar way. Moreover, we denote $(a^+)^{'}$ is the number such that
$$\frac{1}{a}=\frac{1}{(a^+)^{'}}+\frac{1}{a^+},$$
that is, $(a^+)^{'}=\frac{a \, a^+}{a^+-a}. $
 Finally, we say that $(q,r)$ is $\dot{H^{-s}}-$admissible if
$\frac{2}{q}=\frac{d}{2}-\frac{d}{r}+s$
and
\begin{eqnarray}
\begin{cases}
  (\frac{2d}{d-2s})^+\leq r\leq (\frac{2d}{d-2})^-,\ \ d\geq3.\\
  (\frac{2}{1-s})^+\leq r\leq((\frac{2}{1+s})^+)^{'},\ \ d=2;\\
  (\frac2{1-2s})^+\leq r\leq \infty; \ \ d=1.
\end{cases}
\end{eqnarray}
Given $s\in\R,$ let $\Lambda_s=\{(q,r):(q,r) \ \text{is}\  \dot{H^s}-\text{admissible} \}$. We define the following Strichartz norm
$$\|u\|_{S(\dot{H}^s,I)}=\sup_{(q,r)\in \Lambda_s}\|u\|_{L_t^qL_x^r(I\times\R^2)}$$
and dual Strichartz norm
$$\|u\|_{S^{'}(\dot{H}^{-s},I)}=\inf_{(q,r)\in \Lambda_{-s}}\|u\|_{L_t^{q^{'}}L_x^{r^{'}}(I\times\R^2)}.$$
If $s=0,$ we shall write $S^{'}(\dot{H}^{0},I)=S^{'}(L^2,I)$, $S(\dot{H}^0,I)=S(L^2,I)$. If $I=\R$, we will often omit $I$.
Now, we recall  the radial Sobolev embedding and the Strichartz estimates.
\begin{lemma}[Radial Sobolev embedding\cite{ADM} \cite{Tao-2004}]\label{lem2.1}
For radial $f\in H^1(\R^2)$, then
  \begin{align*}
    \||x|^{\frac{1}{2}}f\|_{L^\infty(\R^2)}\lesssim ||f||_{H^1(\R^2)}.
  \end{align*}
%\begin{proof}
%  By the assumption and the fact that $|\nabla f|=|\pa_rf|$, we get
%  $$w_d\int_{0}^{\infty}|f|^2(r)r^{d-1}dr=\|f\|_{L^2}^2\ \  \text{and}\ \ \int_{0}^{\infty}|\pa_rf|^2(r)r^{d-1}dr=\|\nabla f\|_{L^2}^2,$$
%  {\color{red}where $w_d$ is the $d$ dimension spherical area.}
%
%  Let $u(r):=r^{\frac{d-1}{2}}f(r)$, then we have $u\in L^2((0,\infty))$ and
%  \begin{align*}
%   \|\pa_ru\|_{L^2((0,\infty))}&\leq\frac{d-1}2\|r^{\frac{d-3}{2}}f\|_{L^2((0,\infty))}+\|r^{\frac{d-1}{2}}\pa_rf\|_{L^2((0,\infty))}\\
%   &= \frac{d-1}2\|\frac{f}{|x|}\|_{L^2}+\|\nabla f\|_{L^2}.
%  \end{align*}
% By Hardy's inequality, i.e.$\|\frac{f}{|x|}\|_{L^2}\leq C\|\nabla f\|_{L^2}$, we have $u\in H^1(0,\infty)$. Finally, using the Sobolev embedding, we obtain
% $$\|u\|_{L^\infty}\leq C\|u\|_{H^1((0,\infty))}\leq C\|f\|_{H^1}.$$
%\end{proof}

\end{lemma}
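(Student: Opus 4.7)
\smallskip

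The plan is to exploit radial symmetry to reduce the estimate to a one-dimensional statement and then apply the fundamental theorem of calculus together with Cauchy--Schwarz. By a standard density argument, it suffices to prove the inequality for radial Schwartz functions $f(x) = f(r)$, $r = |x|$, for which $f(r) \to 0$ as $r \to \infty$ and all manipulations below are justified.

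First I would write, for any $r > 0$,
\begin{equation*}
|f(r)|^2 \;=\; -\int_r^\infty \frac{d}{ds}|f(s)|^2\,ds \;=\; -2\int_r^\infty f(s)\,\overline{f'(s)}\,ds,
\end{equation*}
using only that $f$ vanishes at infinity. Multiplying by $r$ and using the trivial bound $r \le s$ in the region of integration, I obtain
\begin{equation*}
r\,|f(r)|^2 \;\le\; 2\int_r^\infty s\,|f(s)|\,|f'(s)|\,ds.
\end{equation*}
Then Cauchy--Schwarz in the measure $s\,ds$ on $(r,\infty)$ gives
\begin{equation*}
r\,|f(r)|^2 \;\le\; 2\Bigl(\int_r^\infty s\,|f(s)|^2\,ds\Bigr)^{1/2}\Bigl(\int_r^\infty s\,|f'(s)|^2\,ds\Bigr)^{1/2}.
\end{equation*}

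Next I would convert the one-dimensional integrals back to $L^2(\R^2)$ norms via polar coordinates: for a radial $g$,
\begin{equation*}
\|g\|_{L^2(\R^2)}^2 \;=\; 2\pi\int_0^\infty s\,|g(s)|^2\,ds,
\end{equation*}
so the two factors above are bounded by $\tfrac{1}{\sqrt{2\pi}}\|f\|_{L^2}$ and $\tfrac{1}{\sqrt{2\pi}}\|\nabla f\|_{L^2}$ respectively (for radial $f$, the radial derivative $f'(r)$ coincides with $|\nabla f|$ pointwise). Combining,
\begin{equation*}
r\,|f(r)|^2 \;\lesssim\; \|f\|_{L^2}\,\|\nabla f\|_{L^2}.
\end{equation*}
Taking square roots and then the supremum over $r > 0$, and bounding the right side by $\|f\|_{H^1}^2$ via AM--GM, I conclude
\begin{equation*}
\bigl\||x|^{1/2} f\bigr\|_{L^\infty(\R^2)} \;\lesssim\; \|f\|_{L^2}^{1/2}\|\nabla f\|_{L^2}^{1/2} \;\lesssim\; \|f\|_{H^1(\R^2)},
\end{equation*}
as claimed. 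Finally I would remove the Schwartz assumption by density: for a general radial $f \in H^1$, choose radial Schwartz approximants $f_n \to f$ in $H^1$; the above inequality applied to $f_n - f_m$ shows that $|x|^{1/2} f_n$ is Cauchy in $L^\infty$ on each annulus, and hence converges pointwise a.e.\ to $|x|^{1/2} f$ with the desired bound.

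The only potentially delicate point is the density step: ensuring that a radial $H^1$ function really is the $H^1$-limit of radial Schwartz functions. This is harmless since the standard mollification/cutoff used to approximate $H^1$ functions preserves radial symmetry when performed with radial cutoffs and mollifiers, so no genuine obstacle appears.
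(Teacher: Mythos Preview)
Your proof is correct; this is the classical Strauss radial-lemma argument (fundamental theorem of calculus plus Cauchy--Schwarz in polar coordinates). The paper itself does not prove Lemma~\ref{lem2.1} at all --- it simply states it with citations to \cite{ADM} and \cite{Tao-2004} --- so there is nothing to compare against, but your argument is exactly the one underlying those references. One cosmetic point: for complex-valued $f$ the identity should read $\frac{d}{ds}|f(s)|^2 = 2\,\mathrm{Re}\bigl(\overline{f(s)}f'(s)\bigr)$ rather than $2f(s)\overline{f'(s)}$, but since you immediately pass to absolute values this does not affect the argument.
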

\begin{lemma}[Strichartz estimates\cite{KeelTao}]
  The following estimates hold.

 $(i)$(linear estimate).
  $$\|e^{it\Delta}f\|_{S(\dot{H}^s)}\leq C\|f\|_{\dot{H}^s}.$$

  $(ii)$(inhomogeneous  estimates).
  $$\left\|\int_{0}^{t}e^{i(t-s)\Delta}g(\cdot,s)ds\right\|_{S(\dot{H}^s,I)}\leq C\|g\|_{S^{'}(\dot{H}^{-s},I)}.$$
\end{lemma}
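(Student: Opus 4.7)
The plan is to follow Keel and Tao's $TT^*$ scheme: first establish the $L^2$-based Strichartz inequalities corresponding to $s=0$, then upgrade to $\dot H^s$-admissible pairs via Sobolev embedding, and finally dualize to obtain the inhomogeneous version.

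For (i) with $s=0$, I would start from the two fundamental bounds on the free Schr\"odinger propagator: the unitarity $\|e^{it\Delta}f\|_{L^2_x}=\|f\|_{L^2_x}$ and the dispersive estimate $\|e^{it\Delta}f\|_{L^\infty_x}\lesssim |t|^{-d/2}\|f\|_{L^1_x}$. Interpolation by Riesz--Thorin yields $\|e^{it\Delta}f\|_{L^r_x}\lesssim |t|^{-d(1/2-1/r)}\|f\|_{L^{r'}_x}$ for $r\in[2,\infty]$. Then $TT^*$ reduces the claim $\|e^{it\Delta}f\|_{L^q_tL^r_x}\lesssim \|f\|_{L^2_x}$ to controlling the bilinear form $\iint \langle e^{i(t-s)\Delta}F(s),G(t)\rangle\,ds\,dt$ on $L^{q'}_tL^{r'}_x\times L^{q'}_tL^{r'}_x$. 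For non-endpoint pairs this follows from Hardy--Littlewood--Sobolev in the time variable combined with the decay above; for the sharp endpoint $(q,r)=(2,\tfrac{2d}{d-2})$ in dimensions $d\ge 3$ one invokes the delicate bilinear real-interpolation argument of Keel--Tao. In the two-dimensional setting of this paper the admissibility range $2\le r<\infty$ excludes the forbidden endpoint, so only the HLS step is needed.

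To transfer to $\dot H^s$-admissible pairs $(q,r)$, I would use Sobolev embedding. Define $\tilde r$ by $\tfrac{1}{\tilde r}=\tfrac{1}{r}+\tfrac{s}{d}$. Then $\tfrac{2}{q}=\tfrac{d}{2}-\tfrac{d}{\tilde r}$, so $(q,\tilde r)$ is $L^2$-admissible, and the homogeneous Sobolev embedding gives $\||\nabla|^{-s}g\|_{L^r_x}\lesssim \|g\|_{L^{\tilde r}_x}$. Applying this with $g=|\nabla|^{s}e^{it\Delta}f$,
$$\|e^{it\Delta}f\|_{L^q_tL^r_x}\lesssim \||\nabla|^{s}e^{it\Delta}f\|_{L^q_tL^{\tilde r}_x}=\|e^{it\Delta}|\nabla|^{s}f\|_{L^q_tL^{\tilde r}_x}\lesssim \||\nabla|^{s}f\|_{L^2_x}=\|f\|_{\dot H^s_x},$$
and taking the supremum over $(q,r)\in\Lambda_s$ proves (i). The endpoint-excluded ranges recorded in Section~2 for $d=2$ are designed precisely so that $\tilde r$ lands in the interior of the $L^2$-admissible window.

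For (ii) I would proceed by duality. Writing the Duhamel term as a retarded convolution and pairing with a test function $h$, the untruncated version
$$\Bigl\langle \int_{\R} e^{i(t-s)\Delta}g(s)\,ds,\,h(t)\Bigr\rangle_{L^2_{t,x}}=\Bigl\langle \int_{\R} e^{-is\Delta}g(s)\,ds,\,\int_{\R} e^{-it\Delta}h(t)\,dt\Bigr\rangle_{L^2_x}$$
factorizes; applying the linear estimate of (i) in the $\dot H^{s}$ direction to the $h$-factor and its dual in the $\dot H^{-s}$ direction to the $g$-factor controls the right-hand side by $\|g\|_{S'(\dot H^{-s},I)}\|h\|_{S(\dot H^{s},I)'}$. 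The retarded truncation $s\le t$ is then restored by the Christ--Kiselev lemma, whose hypothesis of strictly mismatched time exponents is guaranteed by the strict inequalities defining $\Lambda_{\pm s}$. The main technical obstacle I anticipate is the bookkeeping required to verify that every pair produced by duality and Sobolev shifting lands inside the prescribed admissibility windows (particularly near the excluded endpoints in $d=2$), together with the genuine Keel--Tao bilinear argument at the true endpoint in higher dimension --- both of which we simply quote from the reference \cite{KeelTao}.
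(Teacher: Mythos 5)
The paper offers no proof of this lemma at all --- it is quoted directly from the literature --- so there is nothing to compare line by line; your outline is the standard and essentially correct derivation. Two remarks are worth making. First, part (i) for $\dot H^s$-admissible pairs and part (ii) with the source measured in $S'(\dot H^{-s})$ are not literally contained in Keel--Tao, which treats $L^2$-admissible pairs: the homogeneous extension is exactly the Sobolev-shift argument you describe, while the inhomogeneous estimate pairs a $\Lambda_s$ norm on the output against the dual of a $\Lambda_{-s}$ norm on the source, i.e.\ it is a ``non-admissible'' inhomogeneous Strichartz estimate of the type due to Cazenave--Weissler, Kato, Foschi and Vilela (and recorded for the INLS context in Guzm\'an and Farah--Guzm\'an). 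Your factorization $\int_{\R}e^{i(t-s)\Delta}g(s)\,ds = T_s\bigl(T_{-s}^{*}g\bigr)$, composing the $\Lambda_s$ linear estimate with the adjoint of the $\Lambda_{-s}$ linear estimate and then invoking Christ--Kiselev, is precisely how those estimates are proved, so your route is the right one. Second, the only point you defer that genuinely requires verification is the Christ--Kiselev hypothesis $\tilde q'<q$ for every pair $(q,r)\in\Lambda_{s_p}$ against every dual pair $(\tilde q,\tilde r)\in\Lambda_{-s_p}$; this is where the truncated upper endpoints $((\tfrac{2}{1-s})^+)'$ in the $d=2$ admissibility windows of Section~2 earn their keep, and it is the reason the ranges are written with the $a^{\pm}$ notation. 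Modulo citing that bookkeeping (as the paper itself does), your proposal is sound.
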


%\begin{lemma}[Hardy-type inequalities \cite{AO}]\label{Hardy}
  %For $1<p<\infty$ and $p\neq2$, then
  %$$\||x|^{-1}f\|_{L^p(\R^2)}\leq 2^{\frac{p-1}{p}}\frac{p}{|p-2|}\|\nabla f\|_{L^p(\R^2)}$$
%\end{lemma}
Next, we recall some interpolation estimates for the nonlinear term.
\begin{lemma}\label{inh-str}%[Guzm$\acute{a}$n \cite{Gz}]
  Let $2-b<p<\infty$ and $0<b<2$. If $s_p<1$, then for $j \in \{1,2\},$ there exist  $\theta_j\in (0,p)$  sufficiently small so that the following hold true:
   \begin{equation}\label{Str-1}
     \||x|^{-b}|u|^pv\|_{S^{'}(\dot{H}^{-s_p},I)}\leq C\|u\|_{L_t^\infty H_x^1(I\times\R^2)}^{\theta_1}
     \|u\|_{S(\dot{H}^{s_p},I)}^{p-\theta_1}
     \|v\|_{S(\dot{H}^{s_p},I)}
   ,\end{equation}
  %where $C>0$.
 % Let $2-b<p<\infty$ and $0<b<\frac23$. If $s_p<1$ then
  \begin{equation}
    \label{Str-2}\||x|^{-b}|u|^pv\|_{S^{'}(L^2,I)}\leq C\|u\|_{L_t^\infty H_x^1(I\times\R^2)}^{\theta_2}\|u\|_{S(\dot{H}^{s_p},I)}^{p-\theta_2}
    \|v\|_{S(L^2,I)}
   ,\end{equation}
   %where $C>0$ and $\theta\in(0,p)$ is a sufficiently small number.
%\end{lemma}
%\begin{lemma}[\cite{Gz}]\label{Str-3}
 % Let $2-b<p<\infty $ with $0<b<\frac23$. If $s_p<1$ then
 %\begin{equation}
%\label{Str-3}
 %\|\nabla(|x|^{-b}|u|^pu)\|_{S^{'}(L^2)}\leq C\|u\|_{L_t^\infty H_x^1}^{\theta_3}\|u\|_{S(\dot{H}^{s_p})}^{p-\theta_3}
% \|\nabla u\|_{S(L^2)}+C\|u\|_{L_t^\infty H_x^1}^{1+\theta_3}\|u\|_{S(\dot{H}^{s_p})}^{p-\theta_3}
% ,\end{equation}
   where $C>0$ is some constant independent of $u$.
\end{lemma}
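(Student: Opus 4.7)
The strategy is to split the spatial integral $\R^2 = B \cup B^c$ with $B := \{|x| \leq 1\}$, exploiting that $|x|^{-b}$ belongs to $L^\gamma(B)$ for every $\gamma < 2/b$ and to $L^\gamma(B^c)$ for every $\gamma > 2/b$. On each of the two regions I select Strichartz pairs separately (they need not agree) and apply H\"older's inequality to bound $\||x|^{-b}|u|^p v\|_{L_t^{q'}L_x^{r'}}$ by a product involving $\||x|^{-b}\|_{L_x^\gamma(B)}$ (or $\||x|^{-b}\|_{L_x^\gamma(B^c)}$), several $L_x$-norms of $u$, and one $L_x$-norm of $v$, then integrate in time via H\"older again.

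The second key step is to write $|u|^p = |u|^{\theta_j}\,|u|^{p-\theta_j}$. The factor $|u|^{\theta_j}$ is pulled out in $L_t^\infty L_x^{r_0}$ and estimated by the Sobolev embedding $H^1(\R^2)\hookrightarrow L^{r_0}(\R^2)$, which is valid for every finite $r_0 \geq 2$ in two dimensions; this produces the $\|u\|_{L_t^\infty H_x^1}^{\theta_j}$ factor on the right-hand side. The leftover $|u|^{p-\theta_j}$ is placed in an $L_t^q L_x^r$-norm with $(q,r)\in\Lambda_{s_p}$, and $v$ is placed in the appropriate admissible norm (in $\Lambda_{s_p}$ for (\ref{Str-1}) and in $\Lambda_0$ for (\ref{Str-2})). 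This is possible because for $0<s_p<1$ the $\dot H^{s_p}$-admissibility windows listed in \S 2 are nondegenerate open intervals.

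To produce (\ref{Str-1}) concretely, I fix an output pair in $\Lambda_{-s_p}$, run the H\"older decomposition on each of $B$ and $B^c$, and solve the resulting scaling identity (matching the exponents $1/q'$ and $1/r'$ against the contributions from $|x|^{-b}$, $|u|^{\theta_j}$, $|u|^{p-\theta_j}$ and $v$) for the free exponents. A strictly positive but sufficiently small $\theta_1$ then shifts the exponents of the $|u|^{p-\theta_1}$ and $v$ factors slightly into the open $\dot H^{s_p}$-admissibility interval, while simultaneously keeping $\gamma$ on the correct side of $2/b$ in each region so that $|x|^{-b}$ is integrable there. The estimate (\ref{Str-2}) is obtained by the same procedure, except that one starts from an $L^2$-admissible output pair and the norm on $v$ is in $S(L^2,I)$; the role of $\theta_2$ is identical.

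The main obstacle is purely bookkeeping: ensuring that one can simultaneously satisfy (i) $L^\gamma(B)$ or $L^\gamma(B^c)$ integrability of $|x|^{-b}$, (ii) $\dot H^{s_p}$- (or $L^2$-) admissibility of the pair used for $v$ and for $|u|^{p-\theta_j}$, and (iii) scaling/H\"older balance. The condition $p > 2-b$, equivalently $s_p>0$, is precisely what makes the $\dot H^{s_p}$-admissibility interval wide enough to absorb the $O(\theta_j)$ perturbation produced by the Sobolev step; and the fact that $H^1(\R^2)\hookrightarrow L^{r_0}$ holds only for finite $r_0$ is exactly why $\theta_j$ must be taken strictly positive rather than zero.
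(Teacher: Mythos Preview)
Your proposal is correct and follows essentially the same route as the paper. The paper streamlines the bookkeeping by fixing one concrete spatial exponent $\hat r=\frac{2p(p+2-\theta)}{(p-\theta)(2-b)}$ and companion time exponents $\hat q,\hat a,\tilde a$ so that $(\hat q,\hat r)\in\Lambda_0$, $(\hat a,\hat r)\in\Lambda_{s_p}$, $(\tilde a,\hat r)\in\Lambda_{-s_p}$, and then reduces everything to the single spatial inequality $\||x|^{-b}|u|^p v\|_{L_x^{\hat r'}}\lesssim\|u\|_{H^1}^{\theta}\|u\|_{L_x^{\hat r}}^{p-\theta}\|v\|_{L_x^{\hat r}}$ (whose proof, via the $B/B^c$ split you describe, is deferred to Guzm\'an's paper); your version differs only in allowing the exponents to vary between $B$ and $B^c$ rather than committing to a single $\hat r$, which is harmless.
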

 \begin{proof}
 For a sufficiently small number $\theta>0$,
we define the following numbers (depending on $p$ and $b$)
 $$\hat{q}=\frac{2p(p+2-\theta)}{p(p+b)-\theta(p-2+b)},\quad \hat{r}=\frac{2p(p+2-\theta)}{(p-\theta)(2-b)},$$
 and
 $$\tilde{a}=\frac{p(p+2-\theta)}{p(p+b-\theta)-(2-b)(1-\theta)},\quad \hat{a}=\frac{p(p+2-\theta)}{2-b}.$$
% where $\theta>0$ is sufficiently small.
     It is easy to see that $(\hat{q},\hat{r})$ is $L^2$-admissible, $(\hat{a},\hat{r})$ is $\dot{H}^{s_p}$-admissible and $(\tilde{a},\hat{r})$ is $\dot{H}^{-s_p}$-admissible. Moreover, we observe that
$$\frac1{\hat{a}}+\frac1{\tilde{a}}=\frac2{\hat{q}}.$$
By H\"older and Sobolev %inequalities
(see \cite{Gz} for details), we have
\begin{align}\label{Str-4}
\||x|^{-b}|u|^{p}v\|_{L_x^{\hat{r}'}}\lesssim \|u\|_{H^1}^{\theta}\|u\|_{L_x^{\hat{r}}}^{p-\theta}\|v\|_{L_x^{\hat{r}}},
\end{align}
so that \eqref{Str-1} and \eqref{Str-2} follow.
%It is easy to see that
%$$|\nabla(|x|^{-b}|u|^pv)|\lesssim |x|^{-b}|u|^p|\nabla v|+||x|^{-b}|u|^p(|x|^{-1}v)|$$
%By the Lemma \ref{Hardy}, and take $\theta $ smaller to ensure $\hat{r}\neq2,$ then inequalities \eqref{Str-3} follows from \eqref{Str-4}.
\end{proof}
\begin{lemma}[\cite{Dinh}, Lemma $6.4$]\label{inh-str2}
 % Let $d=2$.
 Let $b\in(0,1)$ and $2-b<p<\infty$. Then there exist $(p_1,q_1),\ (p_1, q_2)\in\Lambda_0$ satisfying
 $2p+2>p_1,p_2$,
  %$q_1, q_2\in(2,\infty)$
  and
  $$\|\nabla(|x|^{-b}|u|^{p}u)\|_{S'(L^2,I)}\lesssim(\|u\|_{L_t^{m_1}L_x^{q_1}(I\times\R^2)}^p
  +\|u\|_{L_t^{m_2}L_x^{q_2}(I\times\R^2)}^p)\|\langle\nabla \rangle u\|_{S(L^2,I)},$$
  where $m_1=\frac{\alpha p_1}{p_1-2}$ and $m_2=\frac{\alpha p_2}{p_2-2}.$
\end{lemma}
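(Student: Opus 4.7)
The plan is to reduce the estimate to a standard Hölder/Strichartz manipulation by differentiating via the product rule and then localizing in space. First, write (schematically, ignoring complex factors)
\begin{align*}
\nabla(|x|^{-b}|u|^p u) \;=\; -b\,|x|^{-b-2}x\,|u|^p u \;+\; (p+1)\,|x|^{-b}|u|^p \nabla u,
\end{align*}
so it suffices to control $\||x|^{-b}|u|^p\nabla u\|_{S'(L^2,I)}$ and the remainder $\||x|^{-b-1}|u|^{p+1}\|_{S'(L^2,I)}$. For the remainder, a factor $u$ can be exchanged for $|x|\nabla u$ after integrating by parts, or more directly one bounds it by the same method as the main term with the weight upgraded to $|x|^{-b-1}$.

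Next, decompose $\R^2 = B \cup B^c$ with $B=\{|x|\le 1\}$. On $B^c$ the weight $|x|^{-b}$ is bounded, so it reduces to an ordinary nonlinear Strichartz estimate. On $B$ the singular weight must be absorbed into an auxiliary Lebesgue exponent $\gamma_i$. Fix an $L^2$-admissible target pair $(\bar q,\bar r)$ so that $\|\nabla u\|_{L_t^{\bar q}L_x^{\bar r}}\le \|\langle\nabla\rangle u\|_{S(L^2,I)}$, and choose an admissible pair $(p_i,q_i)\in\Lambda_0$ on each region. Hölder in space and time then yields
\begin{align*}
\||x|^{-b}|u|^p\nabla u\|_{L_t^{\bar q'}L_x^{\bar r'}(I\times\Omega_i)}
\;\lesssim\; \bigl\||x|^{-b}\bigr\|_{L_x^{\gamma_i}(\Omega_i)}\,
\|u\|_{L_t^{m_i}L_x^{q_i}}^{\,p}\,\|\nabla u\|_{L_t^{\bar q}L_x^{\bar r}},
\end{align*}
with the temporal Hölder relation giving $m_i=\alpha p_i/(p_i-2)$ (where $\alpha$ plays the role of $\bar q$), and the spatial scaling giving a linear relation between $p/q_i$, $b$, and $1/\bar r$. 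Integrability of $|x|^{-b}$ on $\Omega_i$ is the usual dichotomy: $b\gamma_i<2$ on $B$ and $b\gamma_i>2$ on $B^c$. Each of these, combined with the scaling identity, translates into a range of $q_i$ (and hence of $p_i$), and the extremal cases produce the asserted strict bound $p_i<2p+2$.

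The main obstacle is showing that the two constraint systems, one on $B$ and one on $B^c$, each admit a solution $(p_i,q_i)\in\Lambda_0$ lying strictly inside $2\le q_i<\infty$. This is exactly where the hypotheses $b\in(0,1)$ and $p>2-b$ enter: they guarantee the critical $q_i$ dictated by the scaling identity lies strictly between $2$ and $\infty$, leaving a small neighborhood in which $\gamma_i$ can be adjusted to satisfy the weight-integrability constraint. Once $(p_1,q_1)$ is fixed near the critical value on $B$ and $(p_2,q_2)$ near it on $B^c$, summing the two Hölder bounds yields the stated inequality. The remainder term $\||x|^{-b-1}|u|^{p+1}\|_{S'(L^2,I)}$ is handled identically, using the same admissible pairs $(p_i,q_i)$; the extra factor $|x|^{-1}$ only tightens the required $\gamma_i$ on $B$, and is still admissible because $b+1<2$.
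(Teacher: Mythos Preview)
The paper does not prove this lemma at all; it is quoted verbatim from \cite{Dinh}, Lemma~6.4, and no argument is given here.  So there is nothing in the paper to compare against, and your outline should be measured against the original source.  Your strategy---product rule, split $\R^2=B\cup B^c$, then H\"older in space and time on each piece---is exactly the argument Dinh uses, so in that sense you have reconstructed the intended proof.

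Two points in your sketch need correction.  First, your parenthetical ``where $\alpha$ plays the role of $\bar q$'' is wrong.  If the dual pair on the left and the pair carrying $\nabla u$ on the right are both $(p_i,q_i)$, the temporal H\"older relation reads $1/p_i'=p/m_i+1/p_i$, which gives $m_i=p\,p_i/(p_i-2)$.  The symbol $\alpha$ in the stated formula is simply the nonlinearity power (Dinh writes the nonlinearity as $|u|^\alpha u$); the present paper renamed $\alpha\to p$ in the estimate but not in the formula for $m_i$.

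Second, the handling of the term $|x|^{-b-1}|u|^{p+1}$ is loose.  The remark about ``exchanging a factor $u$ for $|x|\nabla u$ after integrating by parts'' is not a legitimate move inside a Lebesgue norm; if you mean a Hardy-type substitution $\||x|^{-1}u\|_{L^r}\lesssim\|\nabla u\|_{L^r}$, say so, and note that in $\R^2$ this requires $r\neq 2$.  More to the point, once $(p_i,q_i)$ and the target pair are fixed, the weight exponent $\gamma_i$ is \emph{determined} by the spatial H\"older identity, so you cannot ``tighten'' it after the fact.  What you must do is choose $(p_i,q_i)$ from the outset so that the resulting $\gamma_i$ satisfies $(b+1)\gamma_i<2$ on $B$; this is strictly stronger than $b\gamma_i<2$ but is still achievable precisely because $b+1<2$.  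With that adjustment the remainder and the main term are governed by a single choice of pairs, and the argument closes.
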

Denote $C_{p,d,b}=\frac{\frac{dp}{2}+b}{p+2-(\frac{dp}{2}+b)}$, then we have the following Gagliardo-Nirenberg inequality.

\begin{proposition}[\cite{F}]
  Let $\frac{4-2b}{d}<p<\frac{4-2b}{d-2}$ and $0<b<\min\{2,d\},$ then we have %the Gagliardo-Nirenberg inequality
  \begin{align}\label{G-N}
  \int_{\R^d}|x|^{-b}|u|^{p+2}dx\leq C_0\|\nabla u\|_{L^2}^{\frac{dp}{2}+b}\|u\|_{L^2}^{p+2-(\frac{dp}{2}+b)}.
  \end{align}
  %holds,
  Here  the sharp constant $C_0>0$ is explicitly given by
  $$C_0=C_{p,d,b}^{\frac{4-pd-2b}{4}}\frac{p+2}{(\frac{dp}{2}+b)
  \|Q\|_{L^2}^{p}},$$
  where $Q$ is the unique non-negative, radially-symmetric, decreasing solution of the equation
 $$\Delta Q-Q+|x|^{-b}|Q|^{p}Q=0.$$
Moreover the solution $Q$ satisfies the following relations
 $$\|\nabla Q\|_{L^2}=C_{p,d,b}^{\frac12}\|Q\|_{L^2},$$
  and
  $$\int_{\R^d}|x|^{-b}|Q|^{p+2}dx=C_{p,d,b}\|Q\|_{L^2}^2.$$
\end{proposition}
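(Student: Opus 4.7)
The plan is the classical Weinstein variational argument, adapted to the inhomogeneous weight $|x|^{-b}$. I introduce the functional
$$J(u) := \frac{\|\nabla u\|_{L^2}^{\frac{dp}{2}+b}\,\|u\|_{L^2}^{p+2-(\frac{dp}{2}+b)}}{\int_{\R^d}|x|^{-b}|u|^{p+2}\,dx}, \qquad u\in H^1(\R^d)\setminus\{0\},$$
set $J_{\min} := \inf_u J(u)$, and observe that $J$ is invariant under the two-parameter scaling $u(x)\mapsto \lambda\,u(\mu x)$. Once $J_{\min}$ is shown to be strictly positive and attained, the inequality \eqref{G-N} with $C_0 = 1/J_{\min}$ is immediate by homogeneity, and the task reduces to identifying $J_{\min}$ through the minimizer.

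To prove existence of a minimizer, I would first apply symmetrization. Because $|x|^{-b}$ is itself radially decreasing, the Hardy-Littlewood rearrangement inequality gives $\int|x|^{-b}|u|^{p+2}\leq \int|x|^{-b}(u^*)^{p+2}$, while Polya-Szego and preservation of the $L^2$ norm yield $J(u^*)\leq J(u)$. One may therefore work with radial decreasing minimizing sequences $u_n$, and after a two-parameter rescaling assume $\|u_n\|_2=\|\nabla u_n\|_2=1$. The sequence is then bounded in $H^1_{\mathrm{rad}}(\R^d)$; combining the radial decay $u_n(r)\lesssim r^{-d/2}$ with the range $0<b<\min\{2,d\}$ gives uniform integrable control of $\int|x|^{-b}|u_n|^{p+2}$ near both $0$ and $\infty$, so that on a weakly convergent subsequence the denominator passes to the limit by dominated convergence, producing a nontrivial minimizer $\phi\in H^1_{\mathrm{rad}}$.

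The Euler-Lagrange equation for $\phi$ reads $-\mu_1\Delta\phi+\mu_2\,\phi = \mu_3\,|x|^{-b}\phi^{p+1}$ for positive constants $\mu_j$ computed from the partial derivatives of $J$ at $\phi$, and a further two-parameter rescaling $\phi(x)\mapsto \alpha\,\phi(\beta x)$ normalizes this to the ground state equation $-\Delta Q+Q = |x|^{-b}Q^{p+1}$, with uniqueness provided by the cited results of Yanagida and Genoud. Multiplying this equation by $Q$ and integrating gives the Nehari identity $\|\nabla Q\|_2^2+\|Q\|_2^2=\int|x|^{-b}Q^{p+2}$, while multiplying by $x\cdot\nabla Q$ and using $x\cdot\nabla_x(|x|^{-b}) = -b|x|^{-b}$ produces a Pohozaev-type identity; the resulting $2\times 2$ linear system determines all proportionalities between $\|\nabla Q\|_2^2$, $\|Q\|_2^2$ and $\int|x|^{-b}Q^{p+2}$, and inserting them into $J(Q)=J_{\min}$ yields the explicit $C_0$.

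The main obstacle is the compactness step, because the weight $|x|^{-b}$ is singular at $0$ and decays at $\infty$: along a minimizing sequence one must rule out concentration at the origin (controlled by Hardy's inequality thanks to $b<\min\{2,d\}$) and escape to infinity (precluded for radial decreasing functions by the pointwise decay). The reduction to radial decreasing functions is therefore essential, as it simultaneously tames the singular weight and eliminates translational degrees of freedom, reducing a delicate concentration-compactness analysis to a straightforward weak-to-strong argument in the weighted $L^{p+2}$ norm.
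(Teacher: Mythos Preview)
The paper does not prove this proposition at all: it is quoted verbatim from Farah~\cite{F} (note the citation in the proposition heading), so there is no ``paper's own proof'' to compare against.  Your outline is the standard Weinstein variational argument, which is exactly the method used in~\cite{F}; the scaling invariance of $J$, the reduction to radial decreasing competitors by rearrangement, the radial compactness (Strauss decay at infinity, Hardy-type control near the origin thanks to $b<\min\{2,d\}$), and the Nehari/Pohozaev system for the minimizer are all correct and are precisely the ingredients of Farah's proof.
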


Now, we recall the well-posedness theory of equation in $H^1(\R^2)$ under the ground state $Q$, which is proved by  Farah \cite{F} and  Dinh \cite{Dinh}.

\begin{theorem}[ ]

  Let $d=2,\ 0<b<1,\  2-b<p<\infty$, and $s_p=1-\frac{2-b}{p}.$
 % Let $\frac{4-2b}{d}<p<\frac{4-2b}{d-2}$, $0<b<\min\{2,d\}$ and set $s_p=\frac{d}2-\frac{2-b}{p}.$
Suppose that $u(t)$ is the solution of \eqref{INLS} with initial data $u_0\in H^1$ satisfying
  $$E(u_0)^{s_p}M(u_0)^{1-s_p}<E(Q)^{s_p}M(Q)^{1-s_p},$$
  and
  $$\|\nabla u_0\|_{L^2}^{s_p}\|u_0\|_{L^2}^{1-s_p}<\|Q\|_{L^2}^{s_p}\|Q\|_{L^2}^{1-s_p}.$$
  Then $u(t)$ is   globally well-posed in $H^1$.
%Then the \eqref{INLS} is locally well-posed in $H^1$.
Moreover, the solution satisfy $u \in L_{loc}^q(\R, L^r(\R^2))$ for any Schr\"odinger admissible pair $(q,r)$ and
%   Moreover, for any $t\in \R$ we have
   $$\|\nabla u(t)\|_{L^2}^{s_p}\|u(t)\|_{L^2}^{1-s_p}<\|\nabla Q\|_{L^2}^{s_p}\|Q\|_{L^2}^{1-s_p}.$$
\end{theorem}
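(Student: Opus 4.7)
The plan is to combine mass and energy conservation with the sharp Gagliardo--Nirenberg inequality \eqref{G-N} to show that the scale-invariant quantity
\[
\phi(t):=\|\nabla u(t)\|_{L^2}^{s_p}\,\|u(t)\|_{L^2}^{1-s_p}
\]
stays strictly below its ground-state value $\phi_Q:=\|\nabla Q\|_{L^2}^{s_p}\|Q\|_{L^2}^{1-s_p}$ throughout the maximal interval of existence $I$ given by the local theory. This yields a uniform $H^1$ bound on $u$; the blow-up alternative then forces $I=\R$, and iterating the Strichartz estimates together with Lemma \ref{inh-str2} on short time intervals upgrades the $H^1$ bound to $u\in L^q_{\mathrm{loc}}(\R;L^r)$ for every admissible pair $(q,r)$.

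First I would apply \eqref{G-N} specialized to $d=2$ (so that $\tfrac{dp}{2}+b=p+b$ and $p+2-(\tfrac{dp}{2}+b)=2-b$) and combine with mass and energy conservation to get
\[
E(u_0)\ \geq\ \tfrac{1}{2}\|\nabla u(t)\|_{L^2}^{2}\ -\ \tfrac{C_{0}}{p+2}\|\nabla u(t)\|_{L^2}^{p+b}\,\|u_0\|_{L^2}^{2-b}
\]
for every $t\in I$. Multiplying by $M(u_0)^{(1-s_p)/s_p}$ and using the scaling identity $(2-b)+\tfrac{2(1-s_p)}{s_p}=\tfrac{(p+b)(1-s_p)}{s_p}$, which follows from $1-s_p=\tfrac{2-b}{p}$, recasts this in the scale-invariant form
\[
F\bigl(\phi(t)\bigr)\ \leq\ E(u_0)\,M(u_0)^{(1-s_p)/s_p},\qquad F(y):=\tfrac{1}{2}y^{2/s_p}-\tfrac{C_{0}}{p+2}y^{(p+b)/s_p}.
\]
Taking $s_p$-th powers in the hypotheses gives the equivalent pair $E(u_0)M(u_0)^{(1-s_p)/s_p}<E(Q)M(Q)^{(1-s_p)/s_p}$ and $\phi(0)<\phi_Q$. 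Using the Pohozaev-type identities $\|\nabla Q\|_{L^2}^2=C_{p,2,b}\|Q\|_{L^2}^2$ and $\int|x|^{-b}|Q|^{p+2}\,dx=C_{p,2,b}\|Q\|_{L^2}^2$ recorded in the proposition, a direct computation shows that $F$ attains its unique positive maximum at $y=\phi_Q$ with $F(\phi_Q)=E(Q)M(Q)^{(1-s_p)/s_p}$, is strictly increasing on $(0,\phi_Q)$ and strictly decreasing beyond.

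Since $\phi$ is continuous on $I$ and $F\circ\phi$ stays strictly below the threshold $F(\phi_Q)$, the intermediate value theorem prevents $\phi(t)$ from ever reaching $\phi_Q$: any first crossing time would force the equality $F(\phi(t))=F(\phi_Q)$, contradicting the strict bound. Hence $\phi(t)<\phi_Q$ on $I$, which is exactly the strict gradient/mass bound in the theorem. The uniform control $\sup_{t\in I}\|u(t)\|_{H^1}<\infty$ that follows then gives global well-posedness via the blow-up alternative from the local theory of \cite{G,Dinh}, and the local-in-time admissible Strichartz bounds are produced by a standard short-time continuity argument combining the Strichartz estimate with the nonlinear estimate of Lemma \ref{inh-str2}, iterated to cover any compact subset of $\R$. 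The main technical obstacle is the algebraic bookkeeping in the middle step: verifying that the threshold value $F(\phi_Q)$ coincides exactly with $E(Q)M(Q)^{(1-s_p)/s_p}$, which requires the precise form of the sharp constant $C_0$ together with the Pohozaev identities for $Q$.
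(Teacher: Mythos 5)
The paper does not actually prove this theorem --- it is quoted from Farah \cite{F} and Dinh \cite{Dinh} --- and your argument correctly reproduces the standard proof from those references (the Weinstein/Holmer--Roudenko convexity argument): sharp Gagliardo--Nirenberg plus conservation of mass and energy give $F(\phi(t))\le E(u_0)M(u_0)^{(1-s_p)/s_p}<F(\phi_Q)$, continuity of $\phi$ together with $\phi(0)<\phi_Q$ then traps $\phi(t)$ strictly below $\phi_Q$, and the resulting uniform $H^1$ bound yields global existence via the blow-up alternative and the local-in-time Strichartz bounds by iteration; your scaling identity $(2-b)+\tfrac{2(1-s_p)}{s_p}=\tfrac{(p+b)(1-s_p)}{s_p}$ also checks out. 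One caveat worth flagging: the identity $\int|x|^{-b}|Q|^{p+2}\,dx=C_{p,d,b}\|Q\|_{L^2}^2$ recorded in the paper's Gagliardo--Nirenberg proposition, which you invoke, is mis-stated --- for $d=2$ the Pohozaev relations give $\int|x|^{-b}|Q|^{p+2}\,dx=\tfrac{p+2}{2-b}\|Q\|_{L^2}^2=\tfrac{p+2}{p+b}\,C_{p,2,b}\|Q\|_{L^2}^2$ --- and your key verification $F(\phi_Q)=E(Q)M(Q)^{(1-s_p)/s_p}$ is correct precisely with this corrected identity (with the identity as printed the two sides would agree only if $b=2$), so you should carry out that computation from the corrected Pohozaev relations rather than from the proposition as stated.
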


%\begin{remark}
%  By the conservations of mass and energy, we can obtain the global well-posedness in $H^1(\R^2)$ for the defocusing (INLS).
%\end{remark}

\section{Scattering criterion}\label{s-c-sec}

\noindent
In this section, we prove a new scattering criterion for the solutions of the equation \eqref{INLS}.

\begin{lemma}%[Scattering criterion]
\label{SC}
Let $0<b<1$ and $2-b<p<\infty$. Suppose $u:\R_t\times\R^2\rightarrow \mathbb{C}$ is a radial solution to \eqref{INLS}
and  such that
  \begin{align}\label{zz0}
  M(u_0)=E(u_0)=E\ \  \text{and}\ \
    \|u\|_{L_t^\infty H_x^1(\R\times\R^2)}\leq E,
  \end{align}
  for some $0<E<\infty.$
There exist $\epsilon=\epsilon(E)>0$ and $R=R(E)>0$ such that if
  \begin{align}\label{zz1}
   \liminf_{t\to \infty}\int_{|x|\leq R}|u(t,x)|^{2}dx\leq \epsilon ^{2},
  \end{align}
  and %$u(t)$ satisfies
  \begin{align}\label{zz2}
  \int_{0}^{T}\int_{\R^2}|x|^{-b}|u(t,x)|^{p+2}dx\leq T^{\alpha},
  \end{align}
for some  $0<\alpha<1$,
then $u$ scatters forward in time.
\begin{remark}\label{remark-scatter-criter}
  In fact, the inequality \eqref{zz1} follows from \eqref{zz2}. Indeed, for fixed  $R\gg1$, we may have
  $$R^{-b}\int_{0}^{T}\int_{|x|\leq R}|u|^{p+2}dxdt\leq T^{\alpha}.$$
  By the mean value theorem of calculus on interval $[0,T]$, there exists time sequence $\{t_n\}\rightarrow \infty$ such that
  $$\int_{|x|\leq R}|u(t_n)|^{p+2}dx\rightarrow0,\ \ n\rightarrow\infty.$$
 Then, the inequality  \eqref{zz1} follows from this estimate and the fact that $\|u\|_{L_x^2(|x|\leq R)}\leq R^{1-\frac2{p+2}}\|u\|_{L_x^{p+2}(|x|\leq R)}.$  % ,   then .
\end{remark}

\end{lemma}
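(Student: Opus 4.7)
I plan to follow the Arora-Dodson-Murphy scheme: show that for some $T$ sufficiently large, the free evolution $v(t):=e^{i(t-T)\Delta}u(T)$ has small Strichartz norm on $[T,\infty)$. Given the interpolation estimates of Lemma \ref{inh-str}, a standard long-time perturbation argument will then transfer this smallness to $\|u\|_{S(\dot H^{s_p},[T,\infty))}$, yielding scattering.

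\textbf{Decomposition.} Using \eqref{zz1}, I will pick a large $T$ with $\|u(T)\|_{L^2(|x|\leq R)}^2\leq 2\epsilon^2$ (where $R$, $\epsilon$ are constants to be chosen in terms of $E$), fix an exponent $\beta\in(\alpha,1)$, and apply Duhamel from $0$ to write
$$v(t)=e^{it\Delta}u_0+i\int_0^{T-T^\beta}e^{i(t-s)\Delta}F(u)\,ds+i\int_{T-T^\beta}^{T}e^{i(t-s)\Delta}F(u)\,ds=:L(t)+F_1(t)+F_2(t),$$
where $F(u)=|x|^{-b}|u|^pu$. The linear piece $L$ has vanishing Strichartz tail on $[T,\infty)$ as $T\to\infty$ by density. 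For the far-past piece $F_1$, the time gap $t-s\geq T^\beta$ enables the 2D dispersive estimate $\|e^{i(t-s)\Delta}f\|_{L^r}\lesssim|t-s|^{-(1-2/r)}\|f\|_{L^{r'}}$; combined with H\"older, the $H^1$ bound from \eqref{zz0}, Lemma \ref{lem2.1} and the Gagliardo-Nirenberg inequality \eqref{G-N} to control $\|F(u)\|_{L^{r'}}$, this yields a negative power of $T^\beta$ and hence smallness as $T\to\infty$.

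\textbf{Near-past piece.} This is the main obstacle. On $[T,\infty)$ one has $F_2(t)=e^{i(t-T)\Delta}w$ with $w:=F_2(T)$, so the Strichartz estimate reduces the task to bounding $\|w\|_{\dot H^{s_p}}$, which by interpolation between $L^2$ and $\dot H^1$ follows from a bound on $\|w\|_{L^2}$ (the $\dot H^1$ bound being $\lesssim E$ by direct Strichartz and Lemma \ref{inh-str2}). I plan to estimate $\|w\|_{L^2}^2$ by duality,
$$\|w\|_{L^2}^2=i\int_{T-T^\beta}^{T}\int_{\R^2}|x|^{-b}|u(s,x)|^{p}u(s,x)\,\overline{e^{i(s-T)\Delta}w(x)}\,dx\,ds,$$
and split the $x$-integral at radius $R$. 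On $|x|>R$, the radial Sobolev embedding $\||x|^{1/2}u\|_{L^\infty}\lesssim E$ of Lemma \ref{lem2.1} absorbs $|x|^{-b}$ together with $p$ copies of $u$; the remaining pairing is dominated via \eqref{zz2} after a H\"older splitting, producing a factor of the form $R^{-(b+p/2)}T^{\alpha\theta}\|w\|_{L^2}$ for an appropriate $\theta\in(0,1)$. On $|x|\leq R$, the $L^2$-smallness \eqref{zz1} at time $T$, propagated to $s\in[T-T^\beta,T]$ through the short-time $L^2$-closeness of $u(s)$ and $e^{i(s-T)\Delta}u(T)$, delivers a factor $\epsilon\|w\|_{L^2}$.

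\textbf{Closing up.} The key difficulty is selecting $R$ large, $\epsilon$ small, and $\beta\in(\alpha,1)$ so that both contributions are a small multiple of $\|w\|_{L^2}$, thereby closing $\|w\|_{L^2}\leq\eta$ with $\eta\to 0$ as $\epsilon\to 0$ and $R\to\infty$: the algebraic gains $R^{-(b+p/2)}$, $T^{\alpha\theta-\beta\theta'}$, and $\epsilon$ must together overcome the $H^1$ powers arising through Lemma \ref{inh-str}. After interpolation to get smallness of $\|w\|_{\dot H^{s_p}}$, the Strichartz estimate gives smallness of $\|F_2\|_{S(\dot H^{s_p},[T,\infty))}$, and combining with the estimates for $L$ and $F_1$, the long-time perturbation lemma completes the proof.
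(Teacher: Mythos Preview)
Your overall architecture---Duhamel decomposition into a linear piece, a far-past piece $F_1$, and a near-past piece $F_2$, followed by long-time perturbation---is the same as the paper's. However, there are gaps in both nonlinear pieces.

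For $F_1$: your plan is to control $\|F(u(s))\|_{L^{r'}}$ \emph{uniformly in $s$} via \eqref{zz0}, radial Sobolev, and Gagliardo--Nirenberg, and then invoke the dispersive decay $|t-s|^{-(1-2/r)}$. This cannot give smallness: for any $r>2$ the time integral $\int_0^{T-T^\beta}|t-s|^{-(1-2/r)}\,ds$ (with $t\ge T$) behaves like $t^{2/r}$ and diverges, while for $r=\infty$ it diverges logarithmically. The paper instead uses the $L^1_x\to L^\infty_x$ dispersive bound and H\"older in $s$ to pair $(\int|t-s|^{-2}ds)^{1/2}\lesssim (T^\beta)^{-1/2}$ against $(\int_0^T\int|x|^{-b}|u|^{p+2})^{1/2}\lesssim T^{\alpha/2}$ from \eqref{zz2}; the sublinearity $\alpha<1$ is precisely what makes $F_1$ small. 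Your $F_1$ argument must use \eqref{zz2}, not just the $H^1$ bound.

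For $F_2$: your propagation of the local mass smallness from time $T$ to $s\in[T-T^\beta,T]$ via ``short-time $L^2$-closeness of $u(s)$ and $e^{i(s-T)\Delta}u(T)$'' does not work. The interval has length $T^\beta\to\infty$, so it is not short; and even on short intervals, $\|u(s)-e^{i(s-T)\Delta}u(T)\|_{L^2}$ is only $O(E)$ a priori, while the free evolution does not preserve localization, so knowing $\|\chi_R u(T)\|_{L^2}\le\epsilon$ tells you nothing about $\|\chi_R e^{i(s-T)\Delta}u(T)\|_{L^2}$. The paper's mechanism is the local mass identity $\partial_t\int\chi_R|u|^2=2\int\nabla\chi_R\cdot\mathrm{Im}(\bar u\nabla u)=O(R^{-1})$, which keeps $\|\chi_R u\|_{L^2}\lesssim\epsilon$ across the whole window provided $R\gg|I_2|/\epsilon^2$. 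With that in hand, the paper bypasses your duality computation entirely: it bounds $\|F_2\|_{S(\dot H^{s_p})}$ by the inhomogeneous Strichartz estimate \eqref{Str-1}, reducing to $\|u\|_{L^q_tL^r_x(I_2)}\le|I_2|^{1/q}\|u\|_{L^\infty_tL^r_x(I_2)}$, and controls the last factor by interpolating $\|\chi_R u\|_{L^\infty_tL^2_x}\lesssim\epsilon$ against the radial Sobolev decay of $(1-\chi_R)u$. This is simpler than estimating $\|w\|_{L^2}$ by duality and then interpolating to $\dot H^{s_p}$.
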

\begin{proof}
  % By a standard continuity argument, the Sobolev embedding, Lemma \ref{inh-str} and Lemma \ref{inh-str2}, it suffices to show

 First, we claim that it suffices to show
 \begin{equation}\label{u-S-bdd}
 \|u\|_{S(\dot{H}^{s_p})}<\infty.
 \end{equation}
%In fact, we have
 %$$\left\|\int_{T}^{\infty}e^{-is\Delta}(|x|^{-b}|u|^pu)ds\right\|_{H^1}\lesssim \|u\|_{S(\dot{H}^{s_p},[T,\infty))}^{p-\eta}\|(1+\nabla)u\|_{S(L^2)}$$
 In fact, by a standard continuity argument and Lemma \ref{inh-str2}, the conclusion follows if one has
 $$\|u\|_{L_t^{m_1}L_x^{q_1}},\ \|u\|_{L_t^{m_2}L_x^{q_2}}<\infty,$$
 where $(m_1,q_1)$ and $(m_2,q_2)$ are given in Lemma \ref{inh-str2}.
 However, these two estimates are the consequences of the interpolation between
 \eqref{u-S-bdd} and the assumption \eqref{zz0}.

%
% let $s_1=1-\frac{2}{q_1}-\frac{2}{p q_1}$ and $s_2=1-\frac{2}{q_2}-\frac{2}{p q_2}$(see \cite{Dinh} for details), thus $0<s_1,s_2<1$. By interpolation, we obtain
% $$\|u\|_{L_t^{m_1}L_x^{q_1}},\ \|u\|_{L_t^{m_2}L_x^{q_2}}<\infty.$$
% Then a standard continuity argument and Lemma \ref{inh-str2}, we can obtain that  the global solution $u(t)$  scatters in $H^1$.
Next, we will show the estimate \eqref{u-S-bdd}.
By Lemma \ref{inh-str}, Duhamel formula and continuity argument, we need to show
  $$\|e^{i(t-T_0)\Delta}u(T_0)\|_{S(\dot{H}^{s_p},[T_0,\infty))}\ll 1.$$
We rewrite $e^{i(t-T_{0})\Delta}u(T_{0})$ as
\begin{align*}
  e^{i(t-T_{0})\Delta}u(T_{0})=e^{it\Delta}u_0-iF_{1}(t)-iF_{2}(t),
\end{align*}
where
\begin{align*}
  F_{j}(t):=\int_{I_{j}}e^{i(t-s)\Delta}(|x|^{-b}|u|^{p}u)(s)ds, ~j=1,2,
\end{align*}
with $I_{1}=[0,T_{0}-\epsilon ^{-\theta}]$ and  $I_{2}=[T_{0}-\epsilon ^{-\theta},T_{0}]$. Here, $\theta>0$ will be chosen later.
Let $T_0$ be large enough, then we have
$$
  \|e^{it\Delta}u_0\|_{S(\dot{H}^{s_p},[T_0,\infty))} \ll1.
$$
Hence, it remains to show
\begin{align*}
  \|F_{j}(t)\|_{S(\dot{H}^{s_p},[T_0,\infty))} \ll 1,\quad \text{~ for ~} j=1,2.
\end{align*}
\textbf{Estimation  of $F_{1}(t)$}: We may use the dispersive estimate, H\"older's inequality, and the assumption \eqref{zz2} to obtain
\begin{align*}
&\left\|\int_{0}^{T_0-\epsilon^{-\theta}}e^{i(t-s)\Delta}|x|^{-b}|u|^puds\right\|_{L_x^\infty}\\
\lesssim~&
\int_{0}^{T_0-\epsilon^{-\theta}}|t-s|^{-1}\||x|^{-b}|u|^pu\|_{L_x^1}ds\\
\lesssim~ & \int_{0}^{T_0-\epsilon^{-\theta}}|t-s|^{-1}\left(\||x|^{\frac{-b}{p+2}}u\|_{L_x^{p+2}}^{\frac{p+2}{2}}\||x|^{\frac{-b}{2}}\|
_{L_x^{\frac{4}{b^+}}(B_1)} \||u|^\frac{p}{2}\|_{\frac{4}{2-p-}}\right)ds\\
&+\int_{0}^{T_0-\epsilon^{-\theta}}|t-s|^{-1}\left(\||x|^{\frac{-b}{p+2}}u\|_{L_x^{p+2}}^{\frac{p+2}{2}}\|
\||x|^{\frac{-b}{2}}\|_{L_x^{\frac{4}{b^-}}(B_1^c)} \||u|^\frac{p}{2}\|_{\frac{4}{2-p+}} \right)ds\\
\lesssim~ & \int_{0}^{T_0-\epsilon^{-\theta}}|t-s|^{-1}\||x|^{\frac{-b}{p+2}}u\|_{L_x^{p+2}}^{\frac{p+2}2}ds\\
\lesssim ~ & T_0^{\frac{\alpha}{2}}\epsilon^{\frac{\theta}{2}},
\end{align*}
which yields
$$\left\|\int_{0}^{T_0-\epsilon^{-\theta}}e^{i(t-s)\Delta}|x|^{-b}|u|^puds\right\|_{L_{t,x}^\infty
(T_0,\infty)\times\R^2}\lesssim T_0^{\frac{\alpha}{2}}\epsilon^{\frac{\theta}{2}}.$$
On the other hand, we may rewrite $ F_1$ as
\begin{equation}\label{F1}
F_1(t)=e^{i(t-T_0+\epsilon^{-\theta})\Delta}u(T_0-\epsilon^{-\theta})-e^{it\Delta}u_0.
\end{equation}
For any $(q,r)\in \Lambda_{s_p}$, we define the pair $(q_0,r_0)$   by
$$q_0=(1-s_p)q,\ \ \ r_0=(1-s_p)r,$$
then $(q_0,r_0)\in\Lambda_0.$
By Strichartz  and the equality \eqref{F1},
 we have
$$\|F_1(t)\|_{L_t^{q_0}L_x^{r_0}}\lesssim 1.$$
Thus, by interpolation, we get
$$\|F_1(t)\|_{S(\dot{H}^{s_p},[T_0,\infty))}\lesssim (T_0^{\alpha}\epsilon^{\theta})^{\frac{1-s_p}{2}}.$$
\textbf{Estimation  of $F_{2}(t)$}: By the inequality \eqref{Str-1}, Sobolev  and radial Sobolev embedding, we get
\begin{equation}\label{F-2-1}
\begin{split}
 \|F_2(t)\|_{S(\dot{H}^{s_p},[T_0,\infty))}\lesssim & \||x|^{-b}|u|^pu\|_{S^{'}(\dot{H}^{-s_p})}\\
 \lesssim~&\|u\|_{L_t^\infty H_x^1}^{\eta}\|u\|_{S(\dot{H}^{s_p})}^{p+1-\eta}\\
 \lesssim~&\sup_{(q,r)\in\Lambda_{s_p}}\|u\|_{L_t^qL_x^r(I_2)}^{p+1-\eta}\\
 \lesssim~&\sup_{(q,r)\in\Lambda_{s_p}}\|u\|_{L_t^{\infty}L_x^r(I_2)}^{p+1-\eta}\epsilon^{\frac{-(p+1-\eta)\theta}{q}}.
\end{split}
\end{equation}
%Let $T_0$ be large enough.
 By the assumption \eqref{zz1}, we may choose $T>T_0$ so that
\begin{equation}\label{3.10-cor}
\int\chi_R(x)|u(T,x)|^2<\epsilon^2.
\end{equation}
 Here $\chi_R(x):=\chi(\frac xR)$ for $R>0$, where  $\chi(x)$ is a  radial smooth function such that
 \begin{equation*}
  \chi(x)=\left\{
    \begin{aligned}
    &1,\ \  |x|\leq \tfrac12,\\
    &0,\ \  |x|>1.
 \end{aligned}\right.
 \end{equation*}
%where $\chi_R(x):=\chi(\frac{x}{R})$ and $\chi(x)$ is smooth cut-off function on $\{|x|<1\}$.
 Using the identity
$$\pa_t|u|^2=-2\nabla\cdot \text{Im}(\bar{u}\nabla u)$$
%and identity $\pa_t|u|^2=-2\nabla\cdot \text{Im}(\bar{u}\nabla u)$,
 together with \eqref{3.10-cor}, integration by parts, and Cauchy-Schwarz, we can deduce
$$\left|\pa_t\int_{I_2}\chi_R|u|^2ds\right|\lesssim \frac{1}{R}.$$
Thus, for $R\gg \epsilon^{-(2+\theta)}$, we find
$$\|\chi_Ru\|_{L_t^\infty L_x^2(I_2\times \R^2)}\lesssim \epsilon.$$
Using the radial Sobolev inequality and choosing $R$ large enough, we have
\begin{equation*}
     \|u\|_{L_t^\infty L_x^r  }
\lesssim\|\chi_Ru\|_{L_t^\infty L_x^2 }^\frac{1}{r }
\|  u\|_{L_t^\infty L_x^{2(r-1)} }^{\frac{r-1}{r}}
   +\|\left(1-\chi_R\right)u\|_{L_{t,x}^\infty  }^\frac{r-2}{r}
\|  u\|_{L_t^\infty L_x^{2} }^\frac{2}{r}
\lesssim~  \epsilon^{\frac1{r}},
\end{equation*}
where the time-space norms are over the region $I_2\times \R^2$.

By the definition of $\Lambda_{s_p}$, there exists $\delta,c>0$ such that $\frac{2}{1-s_p}+\delta \leq r\leq c.$ Thus, by \eqref{F-2-1}, we have
\begin{align*}
\|F_2(t)\|_{S(\dot{H}^{s_p},[T_0,\infty))}\lesssim~& \sup_{r}\|u\|_{L_t^\infty L_x^r(I_2)}^{p+1-\eta}\epsilon^{\frac{-(p+1-\eta)\theta}{2}}\\
\lesssim~& \epsilon^{(p+1-\eta)(\frac{1}{c}-\frac{\theta}{2})}.
\end{align*}
Choosing $\theta=\frac1c,\ \gamma $ such that $\epsilon=T^{-c(1+\gamma)}$ and $\gamma+\alpha<1$, we get
 $$\|e^{i(t-T_0)\Delta}u(T_0)\|_{S(\dot{H}^{s_p},[T_0,\infty))}\ll 1.$$
The proof is completed.
\end{proof}
%Then we have $\|e^{i(t-T_0)\Delta}u(T_0)\|_{S(\dot{H}^{s_p},[T_0,\infty))}\ll 1$.

\section{Proof of Theorem \ref{TMain}}

\noindent

Throughout this section, we suppose $u(t)$ is a solution to equation \eqref{INLS} satisfying the hypotheses of Theorem \ref{TMain}. In particular, we have that $u$ is global and uniformly bounded in $H^1$.
 Furthermore, we will see that  there exists $\delta > 0$ so that
\begin{align}\label{b}
  \sup_{t\in \R}\|u(t)\|_{L^2}^{1-s_p}\|\nabla u(t)\|_{L^2}^{s_p}~<~(1-2\delta)\|Q\|_{L^2}^{1-s_p}\|\nabla Q\|_{L^2}^{s_p}.
\end{align}

We will prove the following Morawetz estimates. %as $t\rightarrow \infty .$
%escapes to spatial infinity

\begin{proposition}[Morawetz estimates]\label{pro1}
  Let $d=2$, $0<b<2,\ \ 2-b<p<\infty$ and $u$ be a solution to the focusing equation
\eqref{INLS} on the space-time slab $[0,T]\times \R^d$. Then there exists constant $0<\beta<1$ such that
  \begin{align}\label{Main1}
  \int_{0}^{T}\int_{\R^d}|x|^{-b}|u(t,x)|^{p+2}dxdt<T^{\beta}.
  \end{align}
 % Moreover, for any $R\gg1$,  we have
  %\begin{align}\label{Mian2}
 % %\liminf_{t\to\infty}\int_{|x|<R}|u(t,x)|^{2}dx=0.
  %\end{align}
\end{proposition}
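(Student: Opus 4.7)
The plan is a truncated virial/Morawetz argument in the spirit of the Dodson--Murphy approach \cite{DM-2017-PAMS, ADM} adapted to the inhomogeneous nonlinearity, combined with the sub-threshold coercivity inherited from the sharp Gagliardo--Nirenberg inequality \eqref{G-N} and with the radial Sobolev embedding (Lemma \ref{lem2.1}) to control the contribution outside a large ball.

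First I would fix a radial, smooth weight $\phi_R:\R^2\to\R$ with $\phi_R(x)=|x|^2$ for $|x|\le R$, $\phi_R(x)=2R|x|+c_R$ for $|x|\ge 2R$, and with $(\partial_j\partial_k\phi_R)$ uniformly bounded and positive semi-definite as a matrix. I then form the Morawetz action
\begin{equation*}
M_R(t):=2\int_{\R^2}\nabla\phi_R(x)\cdot\mathrm{Im}\bigl(\bar u(t,x)\,\nabla u(t,x)\bigr)\,dx,
\end{equation*}
for which Cauchy--Schwarz and the a priori bound \eqref{b} yield $|M_R(t)|\lesssim R$ uniformly in $t$.

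Differentiating $M_R$ in time using \eqref{INLS} produces the classical identity
\begin{equation*}
\frac{d}{dt}M_R(t)=\int_{\R^2}\bigl(-\Delta^2\phi_R\bigr)|u|^2\,dx+4\int_{\R^2}\partial_j\partial_k\phi_R\,\mathrm{Re}(\partial_j\bar u\,\partial_k u)\,dx+\mathcal{N}_R(t),
\end{equation*}
where $\mathcal{N}_R$ collects the contribution of $|x|^{-b}|u|^p u$. On the ball $|x|\le R$ the weight equals $|x|^2$, so the quadratic part reduces to $8\|\nabla u\|_{L^2(|x|\le R)}^2$ and $\mathcal{N}_R$ reduces (after integrating by parts and using $\nabla|x|^{-b}\cdot\nabla\phi_R = -2b|x|^{-b}$ there) to the standard virial multiple of $\int_{|x|\le R}|x|^{-b}|u|^{p+2}\,dx$. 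The sharp Gagliardo--Nirenberg inequality together with the strict sub-threshold condition \eqref{b} then produces a coercivity constant $c=c(\delta)>0$ such that the inner contribution is bounded below by $c\int_{\R^2}|x|^{-b}|u(t)|^{p+2}\,dx$.

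For the outer region $|x|>R$, the Hessian of $\phi_R$ is supported in $\{R\le|x|\le 2R\}$ with $|\nabla^k\phi_R|\lesssim R^{2-k}$, so the quadratic error is $O(\|u\|_{H^1}^2)=O(1)$ and is absorbed into the coercive term by taking $R$ large. For the nonlinear tail, the radial Sobolev embedding $|u(x)|\lesssim|x|^{-1/2}$ of Lemma \ref{lem2.1} gives
\begin{equation*}
\int_{|x|>R}|x|^{-b}|u|^{p+2}\,dx\lesssim R^{-b-p/2}\|u\|_{L^2}^2\lesssim R^{-b-p/2}.
\end{equation*}
Collecting all estimates yields the differential inequality
\begin{equation*}
\frac{d}{dt}M_R(t)\ge c\int_{\R^2}|x|^{-b}|u(t)|^{p+2}\,dx-C R^{-b-p/2}.
\end{equation*}
Integrating from $0$ to $T$ and using $|M_R(t)|\lesssim R$ leads to
\begin{equation*}
\int_0^T\!\!\int_{\R^2}|x|^{-b}|u|^{p+2}\,dx\,dt\lesssim R+T R^{-b-p/2},
\end{equation*}
and balancing the two terms by the choice $R=T^{2/(p+2b+2)}$ yields \eqref{Main1} with $\beta=\tfrac{2}{p+2b+2}\in(0,1)$.

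The principal obstacle is verifying the coercivity of the inner virial expression with sharp constants---specifically, combining the $\Delta\phi_R$ contribution with the cross term coming from $\nabla|x|^{-b}\cdot\nabla\phi_R$ so that the sub-threshold assumption \eqref{b} forces the desired global lower bound---and carefully treating the transition annulus $\{R\le|x|\le 2R\}$ where neither the inner coercivity nor the clean radial tail bound directly applies, so that the leftover terms there are genuinely negligible relative to $R^{-b-p/2}$.
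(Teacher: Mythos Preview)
Your proposal is the same argument as the paper's: a truncated virial weight equal to $|x|^2$ on $\{|x|\le R\}$ and linear at infinity, the Morawetz identity of Lemma~\ref{Mor-Iden}, the coercivity-on-balls step (your ``principal obstacle'' is exactly Lemma~\ref{Cor}), radial Sobolev for the nonlinear tail, and the final balancing $R\sim T^{1/(1+\alpha)}$.

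One step is written incorrectly and should be fixed. You assert that for $|x|>R$ ``the Hessian of $\phi_R$ is supported in $\{R\le|x|\le 2R\}$'' and that ``the quadratic error is $O(\|u\|_{H^1}^2)=O(1)$ and is absorbed into the coercive term by taking $R$ large.'' Neither claim is right for your weight: since $\phi_R(x)=2R|x|+c_R$ on $\{|x|>2R\}$, the Hessian there equals $\tfrac{2R}{|x|}\bigl(\delta_{jk}-\tfrac{x_jx_k}{|x|^2}\bigr)$ (which is nonnegative, so harmless as a lower bound) and $\Delta^2\phi_R=\tfrac{2R}{|x|^3}$, which does not vanish. More importantly, an $O(1)$ error cannot be ``absorbed'' into $c\int|x|^{-b}|u|^{p+2}dx$, which is itself only $O(1)$ and does not grow with $R$. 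The correct estimate is that the bilaplacian contribution is $O(R^{-2})$ (from both the annulus, where $|\Delta^2\phi_R|\lesssim R^{-2}$, and the exterior, where $\int_{|x|>2R}\tfrac{R}{|x|^3}|u|^2\lesssim R^{-2}\|u\|_{L^2}^2$). Consequently the total error in your differential inequality is $C(R^{-2}+R^{-b-p/2})=CR^{-\alpha}$ with $\alpha=\min\{2,\,b+\tfrac{p}{2}\}$, as in the paper, and the optimized exponent is $\beta=\tfrac{1}{1+\alpha}$; your value $\beta=\tfrac{2}{p+2b+2}$ is only valid when $b+\tfrac{p}{2}\le 2$.
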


Using Proposition \ref{pro1}, Remark \ref{remark-scatter-criter}, rescaling, and the scattering criterion in Section \ref{s-c-sec}, we can quickly prove Theorem \ref{TMain}. %\textcolor[rgb]{0.00,0.07,1.00}{(Galilean transform?)}
%\textcolor[rgb]{1.00,0.00,0.50}{The other case is similar}.

Next, we prove Proposition \ref{pro1} by a  Morawetz identity.
First, we recall the necessary coercivity property, that is, the inequality \eqref{b} holds on large balls.

 \begin{lemma}[Coercivity on balls] \label{Cor}
There exists $R=R(\delta, M(u), Q)>0$ sufficiently large so that
$$\sup_{t\in\R}\|\chi_{R}u\|_{L_x^2}^{1-s_p}\|\chi_{R}u\|_{\dot{H}^1}^{s_p}<
(1-\delta)\|Q\|_{L_x^2}^{1-s_p}\|Q\|_{\dot{H}^1}^{s_p}.$$
 In particular, there exists $\delta'$ so that
 $$\int|\nabla(\chi_{R}u)|^2dx-\frac{b+p}{p+2}\int|x|^{-b}|\chi_Ru|^{p+2}dx
 \geq\delta'\int|x|^{-b}|u|^{p+2}.$$
\end{lemma}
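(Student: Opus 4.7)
The plan has three steps: first, show that $\chi_R u$ inherits the sub-threshold $H^1$ bound of $u$ up to an $O(R^{-1})$ correction, yielding the first claim; second, apply the sharp Gagliardo--Nirenberg inequality to $\chi_R u$ and invoke the $Q$-identities to obtain a coercivity estimate on $\|\nabla(\chi_R u)\|_{L^2}^2$; third, replace $\chi_R u$ by $u$ on the right-hand side using the radial Sobolev embedding to control the outer tail.

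For the first step, writing $\nabla(\chi_R u) = \chi_R \nabla u + u\,\nabla \chi_R$ with $|\nabla \chi_R|\lesssim R^{-1}$, an integration by parts together with the uniform $H^1$-bound on $u$ supplied by the hypotheses of Theorem~\ref{TMain} gives
\[
\|\nabla(\chi_R u)\|_{L^2}^2 \le \|\nabla u\|_{L^2}^2 + O(R^{-1})
\]
uniformly in $t$. Combined with $\|\chi_R u\|_{L^2}\le \|u\|_{L^2}$, the subadditivity of $x\mapsto x^{s_p/2}$, and the $2\delta$-margin in \eqref{b}, the first inequality follows once $R$ is chosen large.

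For the second step, the sharp Gagliardo--Nirenberg inequality applied to $\chi_R u$ reads
\[
\int|x|^{-b}|\chi_R u|^{p+2}\,dx \le C_0 \|\nabla(\chi_R u)\|_{L^2}^{p+b} \|\chi_R u\|_{L^2}^{2-b}.
\]
Since in $d=2$ one has $(1-s_p)p = 2-b$ and $s_p p = p+b-2$, the right side equals $C_0 \|\nabla(\chi_R u)\|_{L^2}^2 \bigl(\|\chi_R u\|_{L^2}^{1-s_p}\|\nabla(\chi_R u)\|_{L^2}^{s_p}\bigr)^p$. The first step bounds the parenthetical factor by $(1-\delta)\|Q\|_{L^2}^{1-s_p}\|\nabla Q\|_{L^2}^{s_p}$, while the Pohozaev-type identity $\int|x|^{-b}|Q|^{p+2} = \|\nabla Q\|_{L^2}^2$ combined with equality in sharp Gagliardo--Nirenberg at $Q$ yields $C_0\|Q\|_{L^2}^{2-b}\|\nabla Q\|_{L^2}^{p+b-2}=1$. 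Hence $\int|x|^{-b}|\chi_R u|^{p+2} \le (1-\delta)^p \|\nabla(\chi_R u)\|_{L^2}^2$, so
\[
\int|\nabla(\chi_R u)|^2\,dx - \tfrac{p+b}{p+2}\int|x|^{-b}|\chi_R u|^{p+2}\,dx \ge c_\delta \|\nabla(\chi_R u)\|_{L^2}^2
\]
with $c_\delta := 1 - \tfrac{p+b}{p+2}(1-\delta)^p > 0$.

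For the third step, using Lemma~\ref{lem2.1} and $\chi_R \equiv 1$ on $\{|x|\le R/2\}$,
\[
\int|x|^{-b}|u|^{p+2}\,dx \le \int|x|^{-b}|\chi_R u|^{p+2}\,dx + C R^{1-b-p/2}\|u\|_{L^\infty_t H^1_x}^{p+2},
\]
and the tail is $o_R(1)$ uniformly in $t$ since $p>2-b$ forces $1-b-p/2<0$. Combining with the previous step and choosing $\delta'$ small and $R$ large yields the ``in particular'' inequality. The main subtlety lies precisely here: one needs the uniform-in-$t$ decay of the outer tail, which is exactly what the radial assumption buys, since radial $H^1$ functions obey $|u(x)|\lesssim |x|^{-1/2}\|u\|_{H^1}$ and hence the global $H^1$-bound translates into a uniform pointwise tail bound.
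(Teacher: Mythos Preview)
Your argument is correct and follows exactly the approach the paper defers to \cite{ADM}: localize, invoke the sharp Gagliardo--Nirenberg inequality and the $Q$-identities, then control the tail by radial Sobolev. One computational slip: the identity $\int |x|^{-b}|Q|^{p+2}=\|\nabla Q\|_{L^2}^2$ is off by a factor---multiplying the $Q$-equation by $Q$ gives $\int |x|^{-b}|Q|^{p+2}=\|\nabla Q\|_{L^2}^2+\|Q\|_{L^2}^2=\tfrac{p+2}{p+b}\|\nabla Q\|_{L^2}^2$, so in fact $C_0\|Q\|_{L^2}^{2-b}\|\nabla Q\|_{L^2}^{p+b-2}=\tfrac{p+2}{p+b}$, not $1$. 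This only changes your constant to $c_\delta=1-(1-\delta)^p$, which is still positive, so the conclusion is unaffected. Finally, in Step~3 the combination literally yields the ``in particular'' inequality only up to an additive $O(R^{1-b-p/2})$ error (the tail cannot be absorbed into $\delta'\int|x|^{-b}|u|^{p+2}$ when the latter is small); this is harmless since that error is exactly the $R^{-\alpha}$ term carried in the Morawetz argument of Proposition~\ref{pro1}.
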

\begin{proof}
The proof follows from the conservations of mass and energy and the
Gagliardo-Nirenberg inequality \eqref{G-N}.
We refer to \cite{ADM} for an analogous proof.
\end{proof}

By a direct computation, we have the following Virial/Morawetz identity.
\begin{lemma}[Virial/Morawetz identity]\label{Mor-Iden}
  Let $u$ be the solution of \eqref{INLS} and $a(x)$ be a smooth function. We denote the Morawetz action $M_a(t)$ by
  $$M_a(t)=2\int_{\R^2} \nabla a(x) \text{~Im}(\bar{u}\nabla u)(x)dx.$$
  Then we have
  \begin{align}
    \frac{d}{dt}M_a(t)~=~&-\int\Delta^2a(x)|u|^2dx+4\int \pa_{jk}a(x)Re(\pa_ku\pa_j\bar{u})dx\\
    &-\frac{2p}{p+2}\int\Delta a(x)|x|^{-b}|u|^{p+2}dx+\frac{4}{p+2}\int\nabla a(x)\cdot \nabla(|x|^{-b})|u|^{p+2}dx,
  \end{align}
where the repeated indices are summed.
\end{lemma}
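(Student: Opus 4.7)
The identity is a standard virial-type computation: differentiate $M_a(t)$ in time, use the equation to eliminate the time derivatives, and integrate by parts in space. Writing
\[
M_a(t) = -i\int_{\R^2} \nabla a\cdot(\bar u\,\nabla u - u\,\nabla\bar u)\,dx,
\]
I would differentiate under the integral sign and substitute $\pa_t u = i\Delta u + i|x|^{-b}|u|^p u$ together with its conjugate. The resulting expression splits naturally into a \emph{kinetic} contribution, coming from the $\Delta u$ substitutions, and a \emph{potential} contribution, coming from the nonlinearity.

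For the kinetic piece, after substitution the integrand is $\nabla a\cdot(\bar u\,\nabla\Delta u - \Delta u\,\nabla\bar u)$ plus its complex conjugate. Two successive integrations by parts, combined with the identity $\pa_j\pa_k|u|^2 = 2\,\text{Re}(\pa_j u\,\overline{\pa_k u}) + 2\,\text{Re}(\bar u\,\pa_j\pa_k u)$, trade second derivatives of $u$ for Hessian entries of $a$ tested against $|u|^2$, and produce exactly the first two terms $-\int\Delta^2 a\,|u|^2\,dx + 4\int \pa_{jk}a\,\text{Re}(\pa_k u\,\pa_j\bar u)\,dx$. This is the classical linear Schr\"odinger Morawetz calculation and does not see the inhomogeneity. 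For the potential piece, the substitution yields a term proportional to $\int|x|^{-b}|u|^p\,\nabla a\cdot(\bar u\,\nabla u + u\,\nabla\bar u)\,dx$, which by $\text{Re}(\bar u\,\nabla u)=\tfrac12\nabla|u|^2$ and $|u|^p\nabla|u|^2 = \tfrac{2}{p+2}\nabla|u|^{p+2}$ rewrites as $\tfrac{2}{p+2}\int|x|^{-b}\,\nabla a\cdot\nabla(|u|^{p+2})\,dx$. One further integration by parts, together with the product rule $\nabla\cdot(|x|^{-b}\nabla a) = |x|^{-b}\Delta a + \nabla(|x|^{-b})\cdot\nabla a$, redistributes this contribution across the two announced terms with the stated coefficients $-\tfrac{2p}{p+2}$ and $\tfrac{4}{p+2}$.

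The main technical obstacle is the singularity of $|x|^{-b}$ at the origin, which formally blocks a clean integration by parts there. Since $b<1$ the weight is locally integrable, and this is handled by a standard truncation argument: replace $|x|^{-b}$ by $\chi_\e(x)|x|^{-b}$ with $\chi_\e$ vanishing near $0$ and tending to $1$ as $\e\to 0$, verify the identity for the regularised expression (where every integration by parts is legitimate), and pass to the limit using the uniform $H^1$-bound on $u$, local integrability of $|x|^{-b}$, and dominated convergence; the boundary contributions near $x=0$ vanish because $b<1$. Differentiation under the time integral is justified by a parallel density argument, approximating $u$ by smoother solutions provided by the local well-posedness theory recalled in Section 2.
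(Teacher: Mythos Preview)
Your plan is the standard direct computation, which is exactly what the paper invokes (it states the lemma follows ``by a direct computation'' and supplies no further details). One small bookkeeping point: in the potential piece the substitution also produces a term $-2\int|x|^{-b}|u|^{p+2}\Delta a\,dx$ alongside the $\int|x|^{-b}\nabla a\cdot\nabla|u|^{p+2}\,dx$ term you isolate, and it is the combination of the two, after the final integration by parts, that yields the distinct coefficients $-\tfrac{2p}{p+2}$ and $\tfrac{4}{p+2}$.
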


Using this identity, we may prove proposition \ref{pro1} as following:

\begin{proof}

Let $R\gg 1$ to be chosen later. We take $a(x)$ to be a radial, smooth function satisfying
\begin{eqnarray}
a(x)=
\begin{cases}
|x|^2,& \text{~for~ }|x|\leq R;\\
3R|x|,& \text{~for~ } |x|>2R,
\end{cases}
\end{eqnarray}
and when $R<|x|\leq 2R$, for any multiindex $\gamma$ there hold
\begin{align*}
  \partial_{r}a\geq 0,\,\,\,\partial_{rr}a\geq 0
  \quad \text{and} \quad |\partial^{\gamma}a| \lesssim R|x|^{-|\gamma|+1}.
\end{align*}
Here $\partial_{r}$ denotes the radial derivative. Under these conditions, the matrix $(a_{jk})$ is non-negative.
And, it is easy to verify that
\begin{eqnarray*}
\begin{cases}
a_{jk}=2\delta_{jk},\quad \Delta a=4,\quad \Delta \Delta a=0,& \text{~for~ } |x|\leq R,\\
a_{jk}=\frac{3R}{|x|}[\delta_{jk}-\frac{x_{j}x_{k}}{|x|^2}],\quad \Delta a=\frac{3R}{|x|},\quad \Delta \Delta a=\frac{3R}{|x|^3},& \text{~for~ } |x|>2R.
\end{cases}
\end{eqnarray*}
Thus, we can divide $\frac{dM_a(t)}{dt}$ as follows:
\begin{equation}%\label{eq:main}
\begin{split}
\frac{dM_a(t)}{dt}=&8\int_{|x|\leq R} |\nabla u|^2-\frac{p+b}{p+2}|x|^{-b}|u|^{p+2}dx\\ %\label{eq:er1}
&+\int_{|x|>2R}\frac{-6pR}{|x|^{b+1}}|u|^{p+2}dx+\int \frac{-12Rb}{(p+2)|x|^{b+1}}|u|^{p+2}dx\\
&+\int_{|x|>2R}\frac{-3R}{|x|^3}|u|^2dx+\int_{|x|>2R}\frac{12R}{|x|}|\not\!\nabla u|^2 dx\\%\label{eq:er2}
&+\int_{R<|x|\leq 2R}4Re\bar{u}_{i}a_{ij}u_{j}+\mathcal{O}(\frac{R}{|x|^{b+1}}|u|^{p+2}+\frac{R}{|x|^3}|u|^2)dx,
\end{split}
\end{equation}
where $\not\!\nabla$ denotes the angular derivative and
subscripts denote partial derivatives.
This implies
 \begin{align}\label{Mora:e}
 \int_{|x|<R}|\nabla u|^2dx-\frac{p+b}{p+2} \int_{|x|\leq R}|x|^{-b}|u|^{p+2}dx \lesssim& \frac{dM_a(t)}{dt}+\frac{1}{R^{\alpha}},
\end{align}
where $\alpha:=\min\{2,b+\frac{p}2\}.$ From the   identity
\begin{align}
  \int\chi_R^2|\nabla u|^2=\int|\nabla(\chi_Ru)|^2)+\chi_R\Delta(\chi_R)|u|^2dx,
\end{align}
 we have
$$\|\chi_Ru\|_{\dot{H}^1}^2\lesssim \|u\|_{\dot{H}^1}^2+\frac{1}{R^2}.$$
Thus, by the radial Sobolev inequality, the Sobolev embedding and Lemma \ref{Cor}, we get
\begin{align}\label{Mora:e}
  \int_{|x|\leq \frac{R}{2}}|x|^{-b}|u|^{p+2}dx \lesssim& \frac{dM_a(t)}{dt}+\frac{1}{R^{\alpha}},
\end{align}
From this inequality  and the radial Sobolev inequality, we   have
  $$\int_{\R^2} |x|^{-b}|u|^{p+2}dx \lesssim\frac{dM_a(t)}{dt}+\frac{1}{R^{\alpha}}.$$
 Note that from the uniform $H^1$-bounds for $u$, and the choice of the  weight function $a$, we have
 $$\sup_{t\in\R}|M_a(t)|\lesssim R.$$
Then, we can use  the fundamental theorem of calculus on an interval $[0,T]$ to obtain
 $$\int_{0}^{T}\int_{\R^2} |x|^{-b}|u|^{p+2}dx  dt \lesssim R+\frac{T}{R^{\alpha}}.$$
The conclusion follows if we take $R=T^{\frac{1}{1+\alpha}}$.
%Let $R=T^{\frac{1}{1+\alpha}},$ we have the estimate \eqref{Main1}, which finishes the proof.
\end{proof}

%\emph{Proof of Proposition \ref{pro1}:}

%By the same argument, we deduce
%\begin{align*}
%R^{-b}\int_{0}^{T}\int_{|x|\leq R}|u|^{p+2}dx dt \lesssim&R+\frac{T}{R}.
%\end{align*}
%Let $R=T^{\frac{1}{1+\alpha}},$ we may have
%$$\int_{0}^{T}\int_{|x|\leq R}|u|^{p+2}dx dt \leq T^{\frac{1+b}{1+\alpha}},$$
%which finish the proof.

 \noindent\textbf{Acknowledgements}.
Tengfei Zhao is supported in part by the Chinese Postdoc Foundation Grant 2019M650457.
This work is financially supported by National Natural Science Foundation of China (NSFC-U1930402).  The authors would like to thank the referees for their helpful comments
and suggestions.

%\bibliographystyle{abbrv}%elsarticle-num}
%\bibliography{bibfile}

\begin{thebibliography}{10}

\bibitem{ADM}
A.~K. Arora, B.~Dodson, and J.~Murphy.
\newblock Scattering below ground state for the 2d radial nonlinear
  {S}chr\"odinger equation.
   Proc. Amer. Math. Soc. (to appear).
%\newblock {\em arXiv:1906.00515v1}, 2019.
 %To appear in Proceedings of the American Mathematical Society.

\bibitem{Campos-2019}
L.~Campos.
\newblock Scattering of radial solutions to the {I}nhomogeneous nonlinear
  {S}chr\"odinger equation.
\newblock {\em arXiv:1905.02663}, 2019.

\bibitem{Dinh-blowup}
V.~Dinh.
\newblock Blowup of {$H^1$} solutions for a class of the focusing inhomogeneous
  nonlinear {S}chr\" odinger equation.
\newblock {\em Nonlinear Anal.}, 174:169--188, 2018. %Nonlinear Analysis

\bibitem{Dinh-es}
V.~Dinh.
\newblock Energy scattering for a class of the defocusing inhomogeneous
  nonlinear {S}chr\"odinger equation.
\newblock {\em J. Evol. Equ}, 19:411--434, 2019.

\bibitem{Dinh}
V.~D. Dinh.
\newblock Scattering theory in a weighted ${L}^2$ space for a class of the
  defocusing inhomogeneous nonlinear {S}chr\"odinger equation.
\newblock {\em arXiv:1710.01392}, 2017.

\bibitem{DM-2017-PAMS}
B.~Dodson and J.~Murphy.
\newblock A new proof of scattering below the ground state for the 3d radial
  focusing cubic {NLS}.
\newblock {\em Proc. Amer. Math. Soc.},
  145(11):4859--4867, 2017.

\bibitem{DHR-2008}
T.~Duyckaerts, J.~Holmer, and S.~Roudenko.
\newblock Scattering for the non-radial 3{D} cubic nonlinear {S}chr\"{o}dinger
  equation.
\newblock {\em Math. Res. Lett.}, 15(6):1233--1250, 2008.

\bibitem{F}
L.~G. Farah.
\newblock Global well-posedness and blow-up on the energy space for the
  inhomogeneous nonlinear {S}chr\"{o}dinger equation.
\newblock {\em J. Evol. Equ.}, 16(1):193--208, 2016.

\bibitem{FG-2017}
L.~G. Farah and C.~M. Guzm\'{a}n.
\newblock Scattering for the radial 3{D} cubic focusing inhomogeneous nonlinear
  {S}chr\"{o}dinger equation.
\newblock {\em J. Differential Equations}, 262(8):4175--4231, 2017.

\bibitem{FG-2019}
L.~G. Farah and C.~M. Guzm\'{a}n.
\newblock Scattering for the radial focusing {INLS} equation in higher
  dimensions.
\newblock {\em Bull Braz Math Soc, New Series}, 2019.

\bibitem{G}
F.~Genoud.
\newblock Th$\acute{e}$orie de bifuration et de stabilit$\acute{e}$ pour une
  $\acute{e}$quation de schr\"odinger avec une
  non-lin$\acute{e}$arit$\acute{e}$ compacte.
\newblock {\em Ph.D. Thesis}, 2008.

\bibitem{G1}
F.~Genoud.
\newblock Bifurcation and stability of travelling waves in self-focusing planar
  waveguides.
\newblock {\em Adv. Nonlinear Stud.}, 10(2):357--400, 2010.

\bibitem{G2}
F.~Genoud.
\newblock A uniqueness result for {$\Delta u-\lambda u+V(|x|)u^p=0$} on {$\Bbb
  R^2$}.
\newblock {\em Adv. Nonlinear Stud.}, 11(3):483--491, 2011.

\bibitem{GS}
F.~Genoud and C.~A. Stuart.
\newblock {S}chr\"odinger equations with a spatially decaying nonlinearity:
  existence and stability of standing waves.
\newblock {\em Discrete Contin. Dyn. Syst.}, 21(1):137, 2008.
%Discrete and Continuous Dynamical Systems

\bibitem{Gz}
C.~M. Guzm\'{a}n.
\newblock On well posedness for the inhomogeneous nonlinear {S}chr\"{o}dinger
  equation.
\newblock {\em Nonlinear Anal. Real World Appl.}, 37:249--286, 2017.

\bibitem{HR}
J.~Holmer and S.~Roudenko.
\newblock A sharp condition for scattering of the radial 3{D} cubic nonlinear
  {S}chr\"{o}dinger equation.
\newblock {\em Comm. Math. Phys.}, 282(2):435--467, 2008.

\bibitem{KeelTao}
M.~Keel and T.~Tao.
\newblock Endpoint {S}trichartz estimates.
\newblock {\em Amer. J. Math.}, 120(5):955--980, 1998.

\bibitem{Tao-2004}
T.~Tao.
\newblock On the asymptotic behavior of large radial data for a focusing
  non-linear {S}chr\"{o}dinger equation.
\newblock {\em Dyn. Partial Differ. Equ.}, 1(1):1--48, 2004.

\bibitem{Y}
E.~Yanagida.
\newblock Uniqueness of positive radial solutions of $\delta u+g(r)u+h(r)u^p=0$
  in $\r^n$.
\newblock {\em Arch. Ration. Mech. Anal.}, 115(3):257--274,
  1991.
%Archive for rational mechanics and analysis
\end{thebibliography}

\end{CJK*}
 \end{document}